\documentclass[12pt,reqno]{amsart}
\usepackage{amsmath,amsfonts,amssymb,amsthm,mathrsfs}
\usepackage[T1]{fontenc}
\usepackage{calligra}
\usepackage{verbatim}
\usepackage{setspace}




\textwidth144mm
\textheight200mm
\oddsidemargin7.5mm
\parskip4pt plus2pt minus2pt

\def\R{{\mathbb R}}
\def\C{{\mathbb C}}

\def\N{\mathbb{N}}
\def\Z {\mathbb Z}

\def\F{{\mathbb F}}



\newcommand\Lie[1]{{\mathfrak{#1}}}


\def\ad{\mathop{\rm ad}}

\def\Span#1{\mathop{\rm span}\left\{#1\right\}}


\newlength{\pictht}

\newtheorem{theorem}{\sc Theorem}

\newtheorem{lemma}[theorem]{\sc Lemma}

\theoremstyle{definition}
\newtheorem{ex}[theorem]{ \sc Example}



\begin{document}
\title[Contact and $1$-quasiconformal maps on Carnot groups  ]{Contact and $1$-quasiconformal maps on Carnot groups}
\author{}
\author{Alessandro Ottazzi and Ben Warhurst}

\address{Alessandro Ottazzi, Universit\`a di Milano Bicocca,\\ Milano 20126, Italy}

\email{alessandro.ottazzi@unimib.it}

\address{Ben Warhurst, Universit\`a di Milano Bicocca,\\ Milano 20126, Italy}
\email{benwarhurst68@gmail.com}

\date{}

\thanks{Ben Warhurst was supported by the Institute of Mathematics of the
Polish Academy of Sciences whilst carrying out this
research in the period Jan-Sept of 2009}

\subjclass[2000]{30C65, 58D05, 22E25, 53C17}

\keywords{contact map, conformal map, subriemannian metric}

\begin{abstract}
We characterise the rigidity of Carnot groups in the class of $C^2$ contact maps in terms of complex characteristics. Furthermore, we obtain a Liouville type theorem for all Carnot groups other than  $\R$ or $\R^2$ which states that $1$-quasiconformal maps form finite dimensional
Lie groups.

\end{abstract}
\maketitle
\tableofcontents
\section{Introduction}
A nilpotent Lie algebra $\mathfrak{n}$ is said to admit an $s$-step stratification if it decomposes as the direct sum of subspaces $ \mathfrak{n}=\mathfrak{g}_{-s} \oplus \cdots \oplus \mathfrak{g}_{-1} $ which satisfy the bracket generating property $\mathfrak{g}_{j-1}=[\mathfrak{g}_{-1},\mathfrak{g}_j]$, where $j=-1,\dots,-s+1$, and $\mathfrak{g}_{-s}$ is contained in the centre $ \Lie{z}( \mathfrak{n})$.

Let $N$ denote the connected, simply connected nilpotent Lie group with stratified Lie algebra $\Lie{n}$. The  sub-bundle $\mathcal{H} \subseteq TN$ obtained by left translating $\Lie{g}_{-1}$  is called the {\it horizontal bundle}.
 Furthermore, if $\Lie{n}$ is equipped with an inner product $\langle\,  ,\,  \rangle$ such that $\Lie{g}_{i} \perp \Lie{g}_{j}$ for every $i \ne j$, then $N$ is called Carnot group.

The inner product on $\Lie{n}$ defines an inner product on the tangent space $T_pN$ at every point $p\in N$ by left translation. In particular, for $X,Y \in T_pN$ we have  $$\langle X  , Y  \rangle_p= \langle (\tau_{p^{-1}})_*(X)  , (\tau_{p^{-1}})_*(Y)  \rangle,$$ where $ \tau_{p^{-1}}$ denotes left translation by $p^{-1} \in N$. A smooth curve $\gamma$ is horizontal if its tangent vectors lie in the horizontal bundle, and the length of a horizontal curve is the integral of the lengths of its tangent vectors relative to the inner product described above.  The  { Carnot-Carath\'eodory distance} or subriemannian distance $d(p,q)$ between points $p$ and $q$, is defined as the infimum of the lengths of all horizontal curves joining  $p$ and $q$. By a theorem of Chow~\cite{chow}, the bracket generating property implies that Carnot groups are horizontally path connected and so $d$ is indeed a metric.

Local diffeomorphisms $f$ of $N$ whose differential $f_*$  preserves the horizontal bundle are called {\it contact maps}.
In our framework we shall always  assume, unless otherwise stated, that contact diffeomorphisms are $C^2$.
The group $N$, or its Lie algebra $\Lie{n}$, is said to be {\it rigid} if the space of contact maps between open domains of $N$  form a finite dimensional space.
We shall call $N$ {\it nonrigid} otherwise.
 The simplest examples of contact maps are left translations and dilations. A dilation by $t$ of an element $X \in \Lie{n}$ is defined by $\delta_t(X)=\sum_{k=1}^s t^k X_{-k}$, and the corresponding map of $N$ induced through the exponential defines dilation on $N$ which we also denote by  $\delta_t$. We also note that left translations are isometries of the subriemannian metric and the dilations are homogeneous in the sense that  $d(\delta_t(p),\delta_t(q))=t d(p,q)$.

The notion of rigidity makes sense with low regularity assumptions. For example in the theory of quasiconformal mappings on Carnot groups, the notion of contact map arises in the class of homeomorphisms with coordinate functions in the horizontal Sobolev space $HW_{loc}^{1,1}(N,\R)$.  However, the authors know no example in which the group is rigid in the class of $C^{\infty}$ maps but nonrigid in $HW_{loc}^{1,1}(N,\R)$. This situation provides significant motivation for studying rigidity and has been remarked upon by Kor{\'a}nyi in his MathSciNet review of \cite{MargMost}, by Heinonen in \cite{Heinonen} and by Tyson in \cite{RNC}.

For  $C^{2}$  maps, it is sufficient to consider the rigidity problem for
contact diffeomorphisms which are induced by $C^\infty$ vector fields. A vector field $U$ is said to be a {\it contact vector field} if its local flow is a $1$-parameter group of contact maps. A contact vector field is characterised by the integrability condition $[U,\mathcal{H}]\subseteq \mathcal{H}$. Every contact map fixing $p$ induces an automorphism on the set $\mathcal{CA}_p$ consisting of germs of contact vector fields at a point $p$. If  $\mathcal{CA}_p$ forms a finite dimensional vector space then the automorphisms of $\mathcal{CA}_p$ form a finite dimensional vector space thus implying finite dimensionality of local contact mappings.

The integrability condition can be used to construct contact vector fields, however it often requires solving a large system of nonlinear PDE's which is usually a difficult task, even with software such as MAPLE. In  \cite{tanak1} and \cite{tanak2}, Tanaka developed an algebraic prolongation, which shows that  $\mathcal{CA}_p$  is  finite dimensional if and only if the prolongation is finite dimensional. Furthermore, Tanaka showed that finiteness of his prolongation is equivalent to the finiteness of the usual prolongation of the subalgebra $\Lie{h}_0(\Lie{n})$ consisting of the derivations of $\Lie{n}$ which annihilate all except the first stratum. Note here that  the usual prolongation refers to the prolongation in the sense of Singer and Sternberg \cite{SS1}. In \cite{GQS1} and \cite{Spencer} it is shown that infiniteness of the usual prolongation is equivalent to the complexified algebra containing rank $1$ elements. We thus obtain the following rigidity criteria for $N$. 

\begin{theorem}\label{main1}
Let $N$ be a real Carnot group with Lie algebra $\Lie{n}$. Then the following conditions are equivalent.
\begin{itemize}
 \item[(i)]  $N$ is nonrigid;

 \item[(ii)]  there exists $\varphi_\C$ in the complexification of $\Lie{h}_0(\Lie{n})$  of rank $1$;

 \item[(iii)]   there exists a vector $X_\C$ in the complexification of $\Lie{g}_{-1}$ such that ${\rm ad}X_\C$ has rank $0$ or $1$.
\end{itemize}

\end{theorem}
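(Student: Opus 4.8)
The plan is to treat the two nontrivial chains separately. The equivalence of (i) and (ii) is essentially a matter of concatenating the results already quoted in the introduction, while the equivalence of (ii) and (iii) is a self-contained piece of linear algebra on $\Lie{n}$ which I regard as the substance of the theorem. For (i)$\,\Leftrightarrow\,$(ii) I would argue as follows: nonrigidity of $N$ means the space of local contact maps is infinite dimensional, which by the discussion preceding the statement is equivalent to $\mathcal{CA}_p$ being infinite dimensional; Tanaka's theorem then converts this into infiniteness of the algebraic prolongation, and, again by Tanaka, into infiniteness of the usual (Singer--Sternberg) prolongation of $\Lie{h}_0(\Lie{n})\subseteq\gl(\Lie{g}_{-1})$; finally \cite{GQS1} and \cite{Spencer} equate the latter with the existence of a rank $1$ element in the complexification $(\Lie{h}_0(\Lie{n}))_\C$. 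Assembling these gives (i)$\,\Leftrightarrow\,$(ii) with no new computation.

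For (ii)$\,\Leftrightarrow\,$(iii) I would first record the structure of $\Lie{h}_0(\Lie{n})$. Since each $\varphi\in\Lie{h}_0(\Lie{n})$ preserves $\Lie{g}_{-1}$ and kills $\Lie{g}_{-2}\oplus\cdots\oplus\Lie{g}_{-s}$, it is determined by $A=\varphi|_{\Lie{g}_{-1}}\in\gl(\Lie{g}_{-1})$, and the derivation identity is equivalent to the two families of conditions
\[
[AX,Y]+[X,AY]=0\quad(X,Y\in\Lie{g}_{-1}),\qquad [AX,U]=0\quad(X\in\Lie{g}_{-1},\ U\in\Lie{g}_{-j},\ j\ge2),
\]
obtained by applying $\varphi$ to brackets landing in the second and in higher strata respectively. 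After complexifying, a rank $1$ element may be written $\varphi_\C=v\otimes\xi$, that is $\varphi_\C(X)=\xi(X)v$ with $v\ne0$ in $(\Lie{g}_{-1})_\C$ and $\xi\ne0$ in $(\Lie{g}_{-1})_\C^*$. Substituting $A=v\otimes\xi$ into the higher-stratum conditions $[AX,U]=0$ and choosing $X$ with $\xi(X)\ne0$ forces $[v,U]=0$ for every $U$ in a stratum above the first, so $\ad v$ vanishes there; substituting into the second-stratum condition and fixing $X_0$ with $\xi(X_0)=1$ gives $[v,Y]=\xi(Y)[v,X_0]$, so $\ad v$ has rank at most $1$ on $(\Lie{g}_{-1})_\C$. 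Hence $\ad v$ has rank $0$ or $1$ on all of $\Lie{n}_\C$, and taking $X_\C=v$ proves (ii)$\,\Rightarrow\,$(iii).

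For the converse, suppose $X_\C\ne0$ has $\ad X_\C$ of rank $0$ or $1$. If the rank is $1$, I would first observe that its image must lie in $(\Lie{g}_{-2})_\C$: were $\ad X_\C$ to vanish on $(\Lie{g}_{-1})_\C$, the Jacobi identity together with bracket generation $\Lie{g}_{-j-1}=[\Lie{g}_{-1},\Lie{g}_{-j}]$ would propagate the vanishing to every stratum, giving $\ad X_\C=0$. Thus $[X_\C,Y]=\eta(Y)w$ on $(\Lie{g}_{-1})_\C$ for some $\eta\ne0$ and $w\in(\Lie{g}_{-2})_\C$, with $[X_\C,U]=0$ on higher strata. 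Setting $\varphi_\C=X_\C\otimes\eta$, the second-stratum condition reads $\eta(X)\eta(Y)w-\eta(Y)\eta(X)w=0$ and the higher conditions read $\eta(X)[X_\C,U]=0$, both of which hold, so $\varphi_\C$ is a rank $1$ element of $(\Lie{h}_0(\Lie{n}))_\C$. The rank $0$ case, where $X_\C$ is central, is identical but easier, taking $\varphi_\C=X_\C\otimes\eta$ for any nonzero $\eta$.

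The step I expect to require the most care is the converse (iii)$\,\Rightarrow\,$(ii): one must verify that the candidate $\varphi_\C=X_\C\otimes\eta$ genuinely extends to a derivation annihilating every higher stratum, and it is exactly here that the rank bound on \emph{all} of $\Lie{n}_\C$, rather than merely on $(\Lie{g}_{-1})_\C$, is consumed. One should also check that the argument runs uniformly over $\C$ and degenerates correctly in the step $2$ case, where the higher-stratum conditions are vacuous and $\ad v$ automatically has rank at most that of its restriction to $(\Lie{g}_{-1})_\C$.
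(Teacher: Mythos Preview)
Your argument for (ii)$\,\Leftrightarrow\,$(iii) is correct and in fact cleaner than the paper's. The paper argues (ii)$\,\Rightarrow\,$(iii) by choosing a basis adapted to $\varphi_\C$ and reading off the bracket relations, and for (iii)$\,\Rightarrow\,$(ii) it invokes \cite{otz1} to produce a basis in which $X_\C$ brackets nontrivially with a unique basis vector $Y_\C$, then defines $\varphi_\C$ by $Y_\C\mapsto X_\C$. Your approach instead writes down directly the two families of constraints characterising $\Lie{h}_0(\Lie{n})$ as a subspace of $\gl(\Lie{g}_{-1})$, and works with the rank $1$ tensor $v\otimes\xi$ (resp.\ $X_\C\otimes\eta$) without choosing bases. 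This is a genuine simplification: it makes transparent why the \emph{global} rank bound on $\ad X_\C$ is exactly what is needed (it forces $[X_\C,\hat{\Lie{n}}]=0$, which feeds the second family of constraints), and it handles the rank $0$ case uniformly rather than via a separate degeneracy observation.

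Your treatment of (i)$\,\Leftrightarrow\,$(ii), however, understates the work involved. The chain from infinite dimensionality of $\mathcal{CA}_p$ through Tanaka's theorem to \cite{GQS1} is indeed a concatenation of cited results (modulo the observation in Section~\ref{complexification} that finite type is preserved under complexification). But the first link, ``nonrigid $\Leftrightarrow$ $\mathcal{CA}_p$ infinite dimensional'', is not free: rigidity in this paper concerns $C^2$ contact diffeomorphisms, whereas $\mathcal{CA}_p$ and Tanaka's theory concern $C^\infty$ contact vector fields. The introduction \emph{asserts} the reduction but does not prove it; the paper itself flags (i)$\,\Rightarrow\,$(ii) as ``the main issue'' and supplies the missing step in Section~\ref{proof1} via a regularisation argument (a $C^1$ contact field, mollified, yields smooth contact fields which are then polynomial of bounded degree, forcing the original field to be polynomial) followed by the observation that a $C^2$ contact map induces an automorphism of the finite dimensional algebra $\Lie{g}(\Lie{n})$. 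Your claim of ``no new computation'' elides precisely this passage, and your proposal should acknowledge that the $C^2$ setting requires this additional integration argument.
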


We remark that \cite{SS1}, \cite{GQS1}, \cite{GQS2} and \cite{Spencer} all precede \cite{tanak1}. Furthermore Remark 2 on page 77 of \cite{tanak1} refers to  \cite{GQS2}, so we would argue that Tanaka was aware of (i) $\Leftrightarrow$ (ii)  in Theorem \ref{main1}, at least at the infinitesimal level, although it is not explicitly stated in \cite{tanak1}.

It is worthwhile to observe that if we substitute conditions (ii) and (iii) with
\begin{itemize}
 \item[(ii)$_\R$] there exists $\varphi$ in  $\Lie{h}_0(\Lie{n})$  of rank $0$ or $1$;

\item[(iii)$_\R$] there exists a vector $X\in\Lie{g}_{-1}$ such that ${\rm ad}X$ has rank $1$,
\end{itemize}

then one still has that (ii)$_\R\Leftrightarrow$ (iii)$_\R$ and they both imply (i). This was observed in~\cite{otz1} and~\cite{OW}.

The main issue in the proof of Theorem~\ref{main1} is (i) $\Rightarrow$ (ii). The first step is  at the infinitesimal level: combining the Tanaka prolongation theory and the result  in~\cite{GQS1} we show that if there are no rank 1 elements in the complexification of  $\Lie{h}_0(\Lie{n})$ then the space of contact vector fields is finite dimensional.
Then we establish the rigidity for $C^2$ contact mappings using a standard integration argument.

 A trivial case of nonrigidity occurs when $\Lie{n}$ is degenerate in the following sense.
We say that $\Lie{n}$ is degenerate if $\Lie{g}_{-1}$ contains nontrivial degenerate elements, that is elements $ X \in \Lie{g}_{-1}$ such that $[X,\Lie{g}_{-1}]=\{0\}$. When  $\Lie{n}$ is degenerate, the corresponding Carnot group is always the direct product with $\R^n$, where $n$ is the dimension of the degenerate space. It follows that the set of local contact mappings contains the local diffeomorphisms of $\R^n$ implying that the corresponding Carnot group is nonrigid.

The Tanaka prolongation theory can be used in order to characterise at the infinitesimal level any space of diffeomorphisms whose differential satisfies some kind of linear condition. A distinguished application is the  generalisation of the Liouville theorem for 1-quasiconformal maps in Carnot groups.
The classical result states that $C^4$--conformal maps between domains of $\R^3$ are the restriction of the action of some element of the group $O(1,4)$. The same result holds in $\R^n$ when $n> 3$ (see, e.g., Nevanlinna~\cite{N}). A major advance in the theory was the passage from smoothness assumptions to metric assumptions (see Gehring~\cite{Geh} and Reshetnyak~\cite{Res}): the conclusion of Liouville's theorem holds for 1-quasiconformal maps. When the ambient space is not Riemannian there are similar theorems. Capogna and Cowling proved in~\cite{CapCow} that 1-quasiconformal maps defined on open subsets of a Carnot group  are smooth and applied this to give some Liouville type results. In particular, they observe that 1-quasiconformal maps between open subsets of H-type groups whose Lie algebra has dimension larger than $2$ form a finite dimensional space. This follows by combining  the smoothness result in~\cite{CapCow} and the work of Reimann~\cite{R}, who established the corresponding infinitesimal result (see also~\cite{CR, CDKR, RR}). Our contribution is the following statement.

\begin{theorem}\label{main2}
The 1-quasiconformal maps between open domains of a Carnot group N, other than $\R$ and $\R^2$, form a finite dimensional space.
\end{theorem}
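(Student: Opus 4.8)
The plan is to rerun the Tanaka prolongation scheme behind Theorem~\ref{main1}, but with the horizontal structure group $GL(\Lie{g}_{-1})$ replaced by the conformal group, and then to pin down exactly which Carnot groups admit surviving rank $1$ elements. First I would pass to the smooth infinitesimal picture. By Capogna and Cowling~\cite{CapCow}, every $1$-quasiconformal map between open domains of $N$ is $C^\infty$, hence a $C^2$ contact diffeomorphism whose horizontal differential is, at each point, a similarity of $(\Lie{g}_{-1},\langle\,,\,\rangle)$; its infinitesimal generators are the conformal vector fields, the contact vector fields whose flow preserves the conformal class on $\mathcal H$. As the conformality constraint is a linear condition on the differential, the Tanaka machinery applies with $\Lie{h}_0(\Lie{n})$ replaced by its conformal subalgebra
\[
\Lie{h}_0^{\mathrm{conf}}(\Lie{n})=\Lie{h}_0(\Lie{n})\cap\Lie{co}(\Lie{g}_{-1}),\qquad \Lie{co}(\Lie{g}_{-1})=\R\,\id\oplus\Lie{so}(\Lie{g}_{-1}),
\]
where $\Lie{h}_0(\Lie{n})$ is identified with a subalgebra of $\gl(\Lie{g}_{-1})$ through restriction (legitimate since $\Lie{g}_{-1}$ generates $\Lie{n}$). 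Exactly as for Theorem~\ref{main1}, the conformal vector fields form a finite dimensional space if and only if the prolongation of $\Lie{h}_0^{\mathrm{conf}}(\Lie{n})$ is finite, which by~\cite{GQS1} and~\cite{Spencer} amounts to the complexification of $\Lie{h}_0^{\mathrm{conf}}(\Lie{n})$ containing no element of rank $1$; a standard integration argument then transfers finiteness to the $1$-quasiconformal maps.

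The remaining work is the linear algebra of rank $1$ elements. Put $V=\Lie{g}_{-1}$, $m=\dim V$. I claim that $\Lie{so}(V)_\C$ never contains a rank $1$ element, while $\Lie{co}(V)_\C$ does so precisely when $m\le 2$. The first assertion is clear because a complex skew-symmetric endomorphism has even rank. For the second, if $A=\lambda\,\id+S\in\Lie{co}(V)_\C$ with $S$ skew has rank $1$, then its symmetric part $\tfrac12(A+A^{\top})=\lambda\,\id$ has rank at most $2$, so $\lambda=0$ whenever $m\ge3$; but then $A=S$ is skew of rank $1$, a contradiction, and the cases $m=1,2$ are verified directly. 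Two cases now follow at once. If $m\ge 3$ then $\Lie{h}_0^{\mathrm{conf}}(\Lie{n})\subseteq\Lie{co}(V)$ already rules out rank $1$ elements. If $\Lie{n}$ is abelian then $N=\R^m$, there are no higher strata, $\Lie{h}_0^{\mathrm{conf}}(\Lie{n})=\Lie{co}(V)$, and rank $1$ elements occur exactly for $m\le 2$, that is for $N=\R$ or $N=\R^2$.

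The delicate case, and the one I expect to be the real obstacle, is $\Lie{n}$ non-abelian with $m=2$ --- in particular the Heisenberg group. Here $\Lie{co}(V)_\C=\C\,\id\oplus\C J$ does contain rank $1$ elements, so the naive conformal criterion would falsely predict nonrigidity. The point is that the dilation generator is removed by the condition, built into $\Lie{h}_0(\Lie{n})$, that a derivation annihilate the higher strata. Indeed, for a basis $\{X,Y\}$ of $V$ and a conformal derivation $\varphi$ with $\varphi|_V=\lambda\,\id+\mu J$, the derivation identity gives $\varphi[X,Y]=2\lambda[X,Y]$; since $\Lie{g}_{-2}=[V,V]\ne\{0\}$ for non-abelian $\Lie{n}$, annihilation of $\Lie{g}_{-2}$ forces $\lambda=0$, so $\Lie{h}_0^{\mathrm{conf}}(\Lie{n})\subseteq\Lie{so}(V)$ and again no rank $1$ element survives. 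Assembling the cases, the complexification of $\Lie{h}_0^{\mathrm{conf}}(\Lie{n})$ contains a rank $1$ element if and only if $N=\R$ or $N=\R^2$; by the first paragraph this is equivalent to infinite dimensionality of the $1$-quasiconformal maps, and the theorem follows. I note that, in contrast with the contact case, the dichotomy is controlled purely by $\dim\Lie{g}_{-1}$ and the dimension-two computation above, so degeneracy of $\Lie{n}$ plays no separate role.
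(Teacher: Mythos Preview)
Your proposal is correct and follows the same strategic outline as the paper: reduce to smooth conformal maps via Capogna--Cowling, apply the Tanaka prolongation with $\Lie{g}_0$ cut down to the conformal subalgebra, use Tanaka's reduction to the Singer--Sternberg prolongation of $\Lie{h}_0^{\mathrm{conf}}(\Lie{n})\subseteq\Lie{co}(\Lie{g}_{-1})$, split on $m=\dim\Lie{g}_{-1}$, and finish with the integration argument from Theorem~\ref{main1}. The one genuine difference is tactical. For $m\ge 3$ the paper invokes the classical computation (Example~\ref{ex1}, Kobayashi) that $\Lie{co}(m)$ is of type~$2$, and for $m=2$ non-abelian it shows directly that $\Lie{so}(2)$ is of type~$1$. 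You instead route everything through the rank-$1$ criterion of \cite{GQS1}: $\Lie{co}(V)_\C$ has no rank-$1$ element when $m\ge 3$ (via the symmetric-part bound), and in the $m=2$ non-abelian case you make the same reduction to $\Lie{so}(2)$ and then observe that complex skew matrices have even rank. Your approach is thematically tighter with Theorem~\ref{main1} and avoids the explicit prolongation computation; the paper's approach is more self-contained at this step since it does not need the complexification argument. Your treatment of the $m=2$ non-abelian case (the derivation identity forcing $\lambda=0$) is exactly the paper's, as is the closing integration argument.
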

The proof relies on Theorem 1.1 in~\cite{CapCow} that states that 1-quasiconformal maps are $C^\infty$ and in fact conformal in a suitable sense. This allows us to consider the problem at the infinitesimal level and to use the Tanaka formalism. 

The paper is organised as follows. The first section is dedicated to an overview of the Singer--Sternberg prolongation versus the Tanaka prolongation, and a detailed discussion on the result  in~\cite{GQS1} concerning the rank $1$ condition.
In the second section we prove Theorem~\ref{main1},
and in the third section we prove Theorem~\ref{main2}.

This work stems from discussions during the workshop on Geometry of ODE's and Vector Distributions held at the Stefan Banach
International Mathematical Centre in January 2009. In particular we thank
B. Kruglikov, A. \v Cap, B. Doubrov as well as M. Eastwood for pointing out
the key results in \cite{GQS1} and \cite{Spencer}. We are particularly
grateful to B. Kruglikov for detailed discussions on \cite{GQS1}. We also
point out that in parallel to us, B. Doubrov and O. Radko (formerly O. Kuzmich) also have a manuscript in preparation \cite{DoubRad}, which uses (i) $\Leftrightarrow$ (ii) in Theorem~\ref{main1} to extend the results from \cite{kuzmich} and discuss a number of illustrative examples including a proof that all metabelian Lie algebras with centre of dimension $2$ are nonrigid.

\section{Preliminaries}
\subsection{The Symmetric Algebra}

Let $V={\rm Span}\{e_1,\dots,e_n\}$ be a vector space over the field $\F=\R$ or $\C$, and let $\{\varepsilon_1,\dots,\varepsilon_n\}$ denote the corresponding dual basis for $V^*$. For each integer $k \geq 0$ let $\otimes^k V$ and  $\otimes^k V^*$ denote $k$-th tensor power of $V$ and $V^*$ respectively. Recall that the natural pairing $\langle \, , \,  \rangle:\otimes^k V \times \otimes^k V^* \to \F $ is given by
$$\langle v_1 \otimes \dots \otimes v_k ,  \eta_1 \otimes \dots \otimes \eta_k \rangle =\prod_{i=1}^k\eta_i(v_i).$$

For each integer $k\geq 0$ let  $S^k(V^*)\subset \otimes^k V^*$ denote the symmetric $k$-tensors. An element $K \in S^k(V^*)$ has the form $$K=\sum_{j_1,\dots,j_k=1}^n K_{j_1,\dots,j_k} \varepsilon_{j_1}\otimes \dots \otimes \varepsilon_{j_k} $$ where the coefficients $K_{j_1,\dots,j_k}\in \F$ are
symmetric with respect to permutation of the indices $j_1,\dots,j_k$. The symmetric product of elements of $V^*$ is defined linearly by the symmetrization operator, i.e.,
\begin{align*}
\varepsilon_{j_1}\odot \dots \odot \varepsilon_{j_k} = {\rm Sym}(\varepsilon_{j_1}\otimes \dots \otimes \varepsilon_{j_k}) =\frac{1}{k!}\sum_{\sigma \in S_k}\varepsilon_{j_{\sigma(1)}}\otimes \dots \otimes \varepsilon_{j_{\sigma(k)}},
\end{align*}
and the same definitions apply to elements of $V$. Note that $K \in S^k(V^*)$ if and
only if $ {\rm Sym}(K)=K$, moreover
$$K=\sum_{1 \leq j_1 \leq \dots \leq j_k} k!\,K_{j_1,\dots,j_k}\, \varepsilon_{j_1}\odot \dots\odot \varepsilon_{j_k}. $$

The natural pairing restricted to $S^k(V)\otimes S^k(V^*)\to \F$ gives
\begin{align}
\langle v_1 \odot \dots \odot v_k, \eta_1 \odot \dots \odot \eta_k \rangle=\frac{1}{k!}\sum_\sigma \prod_{i=1}^k \eta_{\sigma(i)}(v_i) \label{Npair}
\end{align}
which is nondegenerate and thus identifies $S^k(V)^*$ with $S^k(V^*)$.

 We obtain a symmetric multilinear map $V^k \to\F$, also denoted by $K$, by setting
\begin{align}
K(v_1,\dots,v_k) &=\langle v_1 \otimes \dots \otimes v_k ,  K \rangle \nonumber \\
&=  \sum_{j_1,\dots,j_k=1}^nK_{j_1,\dots,j_k} \langle v_1 \otimes \dots \otimes v_k,  \varepsilon_{j_1}\otimes \dots \otimes \varepsilon_{j_k} \rangle \nonumber \\
&= \sum_{j_1,\dots,j_k=1}^nK_{j_1,\dots,j_k}
\varepsilon_{j_1}(v_1)\varepsilon_{j_2}(v_2)\dots \varepsilon_{j_k}(v_k). \label{KSymtens}
\end{align}
Since a symmetric multilinear map $K:V^k \to\F$ always has the form given by
(\ref{KSymtens}), we conclude that the space of symmetric multilinear maps identifies with $S^k(V^*)$. Furthermore, restricting $K$ to the diagonal of $V^k$ gives rise to a degree $k$ homogeneous polynomial on $V$ which we also denote by $K$ and we write
$$
K(v)=K(v,\dots,v)=\langle K, v^k  \rangle.
$$
Moreover, we can recover $K(v_1,\dots,v_{k})$ as the coefficient of $\lambda_1\lambda_2 \dots \lambda_{k}$ in the expression $K(\lambda_1v_1 +\dots+ \lambda_{k}v_{k}),$
and thus we have an identification of $k$-homogeneous polynomials on $V$ with $S^k(V^*)$. Similarly  $k$-homogeneous polynomials on $V^*$ identify with $S^k(V)$ where $P \in S^k(V)$ is identified with the polynomial $P(\eta)=\langle P, \eta^k \rangle$. The passage from a polynomial to a symmetric multilinear map is called {\it polarisation}.

The set of symmetric multilinear maps $V^k \to V$ is isomorphic with $V\otimes S^k(V^*)$. An element  $L \in V \otimes S^k(V^*)$ has the form
\begin{align}
L&=\sum_{i=1}^n \sum_{j_1,\dots,j_k=1}^n L_{j_1,\dots,j_k}^i e_i \otimes \varepsilon_{j_1}\otimes \dots \otimes \varepsilon_{j_k} \label{symtens}
\end{align} where $L_{j_1,\dots,j_k}^i$is symmetric in the lower indices and the corresponding multilinear map, also denoted by $L$, has the form
\begin{align} L(v_1,\dots,v_k)&=\sum_{i=1}^n \Big (\sum_{j_1,\dots,j_k=1}^n L_{j_1,\dots,j_k}^i \varepsilon_{j_1}(v_1) \dots  \varepsilon_{j_k}(v_k)\Big ) e_i. \label{SymLinMap}
\end{align}
Since every  symmetric multilinear map $V^k \to V$ has the form given by (\ref{SymLinMap}),  we conclude that this space of maps identifies with  $V\otimes S^k(V^*)$. Furthermore, interpreting $S^k(V^*)$ as the set of $k$-homogeneous polynomials on $V$ we can view $V\otimes S^k(V^*)$ as the space of  $k$-homogeneous polynomial maps $V \to V$. Obviously the same holds for $V^*\otimes S^k(V)$.
Furthermore, we note that (\ref{symtens}) can be written in the form
\begin{align*}
L=\sum_{j_2,\dots,j_k=1}^n \Big(\sum_{i, j_1=1}^n L_{j_1,\dots,j_k}^i e_i \otimes \varepsilon_{j_1} \Big)\otimes \varepsilon_{j_2} \otimes \dots \otimes \varepsilon_{j_k},
\end{align*}
and the expression $A_{j_2,\dots,j_k} =\sum_{i, j_1=1}^n L_{j_1,\dots,j_k}^i e_i \otimes \varepsilon_{j_1}$ is an element of $\Lie{gl}(V)$ identifying with the matrix $\big (L_{j_1,\dots,j_k}^i\big  )_{i,j_1} \in \Lie{gl}(\F^n)$. Since  $A_{j_2,\dots,j_k}$ is symmetric under permutation of the indices ${j_2,\dots,j_k}$,  it follows that $ V \otimes S^k(V^*)$ embeds into $\Lie{gl}(\F^n) \otimes S^{k-1}(V^*)$.

The symmetric algebra is given by
\begin{align}
S(V^*)= \bigoplus_{k \geq 0} S^k(V^*) =\F \oplus V^* \oplus S^2(V^*) \oplus S^3(V^*) \oplus \cdots, \label{symAlg}
\end{align}
and elements $T\in S(V^*)$ have the form $T=\sum_k T^k$, where $T^k \in S^k(V^*)$ and $T^k=0$ for all but finitely many $k \in \N$.

\subsection{Singer--Sternberg Prolongation}\label{SSprol}


Let $\Lie{gl}(V)$ denote the Lie algebra of linear endomorphisms of  $V$, and let  $\Lie{g}^{(0)}$ be a Lie subalgebra of  $\Lie{gl}(V)$. Then for each nonnegative integer $k$, the {\it $k$-th Singer--Sternberg prolongation} $\Lie{g}^{(k)}$ of $\Lie{g}^{(0)}$ is the space  $$ \Lie{g}^{(k)}=\Lie{g} \otimes S^{k}( V^*)\cap V \otimes S^{k+1}(V^*)  .$$ Each $T \in \Lie{g}^{(k)}$ corresponds with a symmetric multilinear map $T: V^{k+1} \to V $, such that for each $k$--tuple $(v_1,\dots,v_k) \in V^k$, the endomorphism of $V$ given by $v_{k+1} \to T(v_1,\dots,v_k,v_{k+1} )$ is an element of $\Lie{g}$.

The Lie algebra $\Lie{g}^{(0)}$ is said to be of {\it finite type} if $\Lie{g}^{(k)}=\{0\}$ for some $k\geq 0$. Note that it follows that $\Lie{g}^{(k+\ell)}=\{0\}$ for all $\ell$.
In particular, we say that $\Lie{g}^{(0)}$ is of type $k$ if this is the smallest positive integer for which $\Lie{g}^{(k)}=0$.
  Naturally $\Lie{g}^{(0)}$ is said to be of {\it infinite type} otherwise.

The following examples are significant for our purposes, and they can be found in~\cite{Kob}.
\begin{ex} \label{ex1}

Take  $\Lie{g}^{(0)}=\Lie{co}(n)$, where
$$
\Lie{co}(n)=\{A\in\Lie{gl}(n,\R)\,|\, A+A^{tr}=kI\}.
$$
We show that $\Lie{g}^{(0)}$ is of type $2$ if $n\geq 3$.
The first prolongation $\Lie{g}^{(1)}$ is naturally isomorphic to the dual
space $\R^{n*}$ of $\R^n$. Indeed, let $T=(T_{jk}^i)$ be an element of
$\Lie{g}^{(1)}$. Since the kernel of the homomorphism $A\in\Lie{co}(n)
\rightarrow {\rm trace}(A)\in\R$ is precisely
$\Lie{o}(n)=\{A\in\Lie{gl}(n,\R)\,|\, A+A^{tr}=0\}$, and since $\Lie{o}(n)$ is of
type $1$~\cite{Kob}, the linear mapping 
$$
T=(T_{jk}^i)\in\Lie{g}^{(1)}\rightarrow \xi =\left( \frac{1}{n}\sum_i T_{ik}^i \right) \in \R^{n*}
$$
is injective (the kernel is the first prolongation of $\Lie{o}(n)$). To see that this mapping is also surjective, it is enough to observe that $\xi=(\xi_k)$ is the image of $T$ with $T_{jk}^i=\delta^i_j \xi_k + \delta^i_k \xi_j -\delta^j_k \xi_i$. To prove that $\Lie{g}^{(2)}=0$, set now $T=(T^h_{ijk})\in \Lie{g}^{(2)}$. For each fixed $k$, $T^h_{ijk}$ may be considered as the components of an element in $\Lie{g}^{(1)}$ and hence can be uniquely written
$$
T^h_{ijk}=\delta^h_i \xi_{jk} + \delta^h_j \xi_{ik} -\delta^i_j \xi_{hk}.
$$
Since $T^h_{ijk}$ must be symmetric in all lower indices, we have $\sum_h T^h_{hjk}=\sum_h T^h_{hkj}$, from which follows $\xi_{jk}=\xi_{kj}$.
From $\sum_h T^h_{hjk}=\sum_h T^h_{jkh}$, we obtain $(n-2)\xi_{jk}=-\delta_{jk}\sum_h \xi_{hh}$, whence $(n-2)\sum_h \xi_{hh}=-n\sum_h \xi_{hh}$ and thus $\sum_h \xi_{hh}=0$. Therefore $(n-2)\xi_{jk}=-\delta_{jk}\sum_h \xi_{hh}=0$, which implies $\xi_{jk}=0$ if $n\geq 3$.
\end{ex}

\begin{ex} \label{ex2}
Suppose $\Lie{g}^{(0)}$ contains a linear endomorphism $T$ of rank $1$, then $T=v_0 \otimes \omega$ where $v_0 \in V$ and $\omega \in V^*$. For any positive integer $k$ we can define a symmetric  $(k+1)$-linear map $T_k: V^{k+1} \to V $ by setting $$ T_k(v_1,\dots,v_k,v_{k+1} )=\omega(v_1)\dots \omega(v_{k})\omega(v_{k+1})v_0$$ and the map $$v_{k+1} \to  T_k(v_1,\dots,v_k,v_{k+1} )$$ is precisely $\omega(v_1)\dots \omega(v_{k})T$ and thus a nontrivial element of $\Lie{g}^{(0)}$. Hence  $\Lie{g}^{(k)}\ne\{0\}$ and $\Lie{g}^{(0)}$ is of infinite type.
\end{ex}

A subalgebra $\Lie{g}^{(0)} \subset \Lie{gl}(V)$ is said to be {\it elliptic} if it does not contain any linear endomorphism of rank $1$, hence finite type subalgebras are elliptic.


\subsection{Complex characteristics}
Throughout this section we consider $V$ to be a vector space over $\C$.
The reason lies on the fact that for our purposes
 we shall make use of the Hilbert's Nullstellensatz.
The goal here is to follow~\cite{GQS1} and  prove that
\begin{align}
&\text{\it the prolongation of $\Lie{g}^{(0)}\subset \Lie{gl}(V)$ is of infinite type if and only if $\Lie{g}^{(0)}$ contains}\nonumber\\&  \text{\it elements of rank $1$}.\label{characteristic}
\end{align}
We begin by observing that
$$V \otimes S^{k+1}(V^*) \subset V \otimes V^* \otimes S^k(V^*)= \Lie{gl}(V)\otimes S^k(V^*)$$
which since  implies that
\begin{align*}
V \otimes S^{k+1}(V^*) &= \Lie{gl}(V)\otimes S^k(V^*) \, \cap \,  V \otimes S^{k+1}(V^*)\\
&\equiv \Lie{g}^{(0)}\otimes S^k(V^*) \, \cap \,  V \otimes S^{k+1}(V^*)\,\\
 &\,\,\,\,\,\,\,\oplus\, \Lie{gl}(V)/\Lie{g}^{(0)} \otimes S^k(V^*) \, \cap \,  V \otimes S^{k+1}(V^*)\\
&= \Lie{g}^{(k)} \, \oplus \,  F^{(k)}
\end{align*}
where
\begin{align*}
 F^{(k)}=V \otimes S^{k+1}(V^*)/ \Lie{g}^{(k)} &= \Lie{gl}(V)/\Lie{g}^{(0)} \otimes S^k(V^*) \, \cap \,  V \otimes S^{k+1}(V^*).
\end{align*}

Observe that if $\Lie{g}^{(0)}$ is of finite type, then for some $k\geq 1$ we have
$$
V \otimes S^{k+1}(V^*) \, \equiv F^{(k)}=  \Lie{gl}(V)/\Lie{g}^{(0)} \otimes S^k(V^*) \, \cap \,  V \otimes S^{k+1}(V^*)
$$
for some $k$ and so every $P^* \in S^{j}(V^*)$, where $j \geq k$, is such that  $\Lie{g}^{(\ell)}\odot P^*\in F^{(l+j)}$ for all $\ell$.  Conversely, if there exists a $k$ such that every $P^* \in S^{j}(V^*)$ satisfies $\Lie{g}^{(\ell)}\odot P^*\in F^{(l+j)}$ for all $\ell$ and $j \geq k$, then  $\Lie{g}^{(0)}$ is of finite type.

We reformulate these observations in the convenient language of exact sequences and consider the problem in the dual. For every integer $k \geq -1$
the following sequence is exact
$$
\Lie{g}^{(k)} \stackrel{\scriptstyle \iota_k} \hookrightarrow V \otimes S^{k+1}( V^*) \stackrel{\scriptstyle \pi_k}{\longrightarrow}   F^{(k)}.
 $$
Here we denoted by $\iota_k$  and $\pi_k$ the canonical immersions and  projections,
and in the case $k =-1$ we put $ \Lie{g}^{(-1)}=V$ and $F^{(-1)}=\{0\}$ . The corresponding  dual sequences take the form
$$
 \Lie{g}^{(k)*} \stackrel{\scriptstyle \iota_k^*}{\longleftarrow}  V^* \otimes S^{k+1}( V)   \stackrel{\scriptstyle \pi_k^*} {\hookleftarrow} F^{(k)*}
$$
where $\pi_k^*(F^{(k)*})$ is the set of all elements in $V^* \otimes S^{k+1}( V)$ which vanish on $ \Lie{g}^{(k)}$, as one can see from standard algebra.
The direct sum of the dual sequences gives the sequence
$$
M^* \stackrel{\scriptstyle \iota^*}{\longleftarrow}   V^* \otimes S(V)   \stackrel{\scriptstyle \pi^*} {\hookleftarrow} F^*,
$$
 where $M^*= \bigoplus_{k \geq -1} \Lie{g}^{(k)*} $, $\iota^*|_{ V^* \otimes S^{k+1}( V)}=\iota_k^*$,  $S(V)=\bigoplus_{k \geq 0}S^k(V) $, $\pi^*|_{F^{(k)*}}=\pi_k^*$ and $F^*= \bigoplus_{k \geq -1} F^{(k)*}$ .

 A nonzero element $\xi \in V^*$ is called a {\it characteristic} if there exists $w \in V$ such that $w \otimes \xi \in \Lie{g}^{(0)}$. Equivalently, $\xi$ is a characteristic if and only if the map $\sigma_\xi:V \to F^{(0)}$ defined by $\sigma_\xi(\cdot)=\pi_0(\,\cdot\, \otimes \xi)$ fails to be injective.
 In fact, note that $w \otimes \xi$ is a rank $1$ element of $\Lie{g}^{(0)}$ and conversely every such element defines a characteristic.
We also note that if $\xi \in V^*$ is not characteristic then we have an injective map $\sigma_\xi :V \to V \otimes V^*/\Lie{g}^{(0)}$ and so necessarily $\dim (V) \leq \dim V \otimes V^*/\Lie{g}^{(0)}$ or equivalently $\dim \Lie{g}^{(0)} \leq n^2-n$. Hence if  $\dim \Lie{g}^{(0)} > n^2-n$ then every element in $V^*$ is characteristic and by Example \ref{ex2} the prolongation of $\Lie{g}^{(0)}$ is infinite.

We say that an element $P \in S(V)$  {\it annihilates} $M^*$ if
$$
V^* \otimes S (V) \odot P \subset \pi^*(F^*).
$$
We observe that such a $P$ cannot have constant term and in fact must lie in $S^+(V)=\oplus_{\ell > 0} S^\ell(V)$ . The set {\scriptsize\calligra I}\, \,  consisting of all annihilating $P$ is an ideal in the ring $S(V)$. Moreover, the following two lemmas show that if  {\scriptsize\calligra V}\, \, denotes the set consisting of all characteristics,  then the set of common zeros of   {\scriptsize\calligra I}\,\, is exactly \,\, {\scriptsize\calligra V}\,\,$ \  \cup \ \{0\}$ .

\begin{lemma} If  $P \in$  {\scriptsize\calligra I}\,\,, then $P(\xi)=0$ for all $\xi \in ${\scriptsize\calligra V}\,\,. \end{lemma}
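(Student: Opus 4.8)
The plan is to test the annihilation condition against the prolongations of the rank-one element attached to $\xi$. Fix a characteristic $\xi$; by definition there is a nonzero $w\in V$ with the rank-one endomorphism $w\otimes\xi$ lying in $\Lie{g}^{(0)}$. Applying the construction of Example~\ref{ex2} to $w\otimes\xi$, I obtain for every $\ell\geq 1$ the symmetric $\ell$-linear map $(v_1,\dots,v_\ell)\mapsto\xi(v_1)\cdots\xi(v_\ell)\,w$, which is exactly the element $w\otimes\xi^\ell\in V\otimes S^\ell(V^*)$ and which, by that example, lies in $\Lie{g}^{(\ell-1)}$. These are the elements of $M=\bigoplus_k\Lie{g}^{(k)}$ that I will pair against.

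Next I would unwind membership in the ideal. Since $P$ annihilates $M^*$, the element $(\eta\otimes 1)\odot P=\eta\otimes P$ lies in $\pi^*(F^*)$ for every $\eta\in V^*$, and $\pi^*(F^*)$ is precisely the set of elements of $V^*\otimes S(V)$ vanishing on $M$, namely $\bigoplus_k(\Lie{g}^{(k)})^\perp$. As this subspace respects the grading, the homogeneous component $\eta\otimes P_\ell$ in $V^*\otimes S^\ell(V)$ of $\eta\otimes P$ must vanish on $\Lie{g}^{(\ell-1)}$, where $P=\sum_{\ell\geq 1}P_\ell$ is the homogeneous decomposition (there is no constant term, as $P\in S^+(V)$).

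Finally I would combine the two facts through the natural pairing. One computes $\langle\eta\otimes P_\ell,\,w\otimes\xi^\ell\rangle=\eta(w)\,\langle P_\ell,\xi^\ell\rangle=\eta(w)\,P_\ell(\xi)$, using that $S^\ell(V)$ is identified with the $\ell$-homogeneous polynomials on $V^*$ via $P_\ell(\xi)=\langle P_\ell,\xi^\ell\rangle$. Since $w\otimes\xi^\ell\in\Lie{g}^{(\ell-1)}$ while $\eta\otimes P_\ell$ annihilates $\Lie{g}^{(\ell-1)}$, the left-hand side vanishes, so $\eta(w)\,P_\ell(\xi)=0$. Choosing $\eta$ with $\eta(w)\neq 0$, which is possible because $w\neq 0$, forces $P_\ell(\xi)=0$ for every $\ell$, and summing yields $P(\xi)=\sum_\ell P_\ell(\xi)=0$. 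The only real content is the first step, recognising that the rank-one element of a characteristic prolongs to all orders as $w\otimes\xi^\ell$, together with keeping the grading straight so that annihilation can be tested degree by degree; the pairing itself is a routine bookkeeping computation.
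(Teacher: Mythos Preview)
Your argument is correct and follows essentially the same approach as the paper: both exploit that $w\otimes\xi^\ell\in\Lie{g}^{(\ell-1)}$ whenever $\xi$ is characteristic, and then pair this element against $\eta\otimes P_\ell$ (the paper's $w^*\otimes P_\ell$), which lies in the annihilator of $\Lie{g}^{(\ell-1)}$ by the defining property of the ideal. The paper packages the second step via an auxiliary map $\alpha_{P_\ell}^*$, but your direct use of the identification $\pi^*(F^*)=M^\perp$ amounts to the same computation.
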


\begin{proof}

Let $P_{\ell}$ be a homogeneous component of degree $\ell$. Define a linear map $\alpha_{P_{\ell}}^*:V^* \to F^{({\ell-1})*}$ as follows: the annihilating assumption implies $\varepsilon_i \otimes P_{\ell} =\pi_{{\ell-1}}^*(B_i)$ for some $B_i \in F^{({\ell-1})*}$, so that we set $ \alpha_{P_{\ell}}^*({\varepsilon_i}^*)=B_i$. If $v^*=\sum_i a_i \varepsilon_i$ then
 $$
\pi_{{\ell-1}}^* \circ \alpha_{P_{\ell}}^*(v^*)=\pi_{{\ell-1}}^* \circ \sum_i a_i \alpha_{P_{\ell}}^*({\varepsilon_i}^*)=\sum_i a_i \pi_{{\ell-1}}^* ( B_i)=\sum_i a_i\varepsilon_i \otimes P_{\ell}=v^* \otimes  P_{\ell} ,
$$
 thus it follows that $\pi_{{\ell-1}}^* \circ \alpha_{P_{\ell}}^*$ is a right tensor multiplication by $P_{\ell}$.

If $\xi$ is a characteristic then  $\pi_{\ell-1}(w\otimes \xi^{\ell})=0$ for some $w\in V$ and for all $\ell \geq 1$, whence
\begin{align*}
0&=\langle \alpha_{P_{\ell}} \circ \pi_{\ell-1} (w \otimes \xi^{\ell}), w^*\rangle\\
 &=\langle w \otimes \xi^{\ell} ,\pi_{\ell-1}^* \circ
\alpha_{P_{\ell}}^*(w^*)\rangle\\
&=\langle w \otimes \xi^{\ell},w^* \otimes P_{\ell} \rangle\\
 &=\langle w,w^*\rangle \langle \xi^{\ell}, P_{\ell} \rangle,
\end{align*}
and so $\langle \xi^{\ell}, P_{\ell} \rangle=0$.
Hence if $P=\sum_{\ell \geq 1} P_{\ell}$ annihilates $M^*$, then $P(\xi)=0$ for all characteristics $\xi$.
\end{proof}

Let now $\xi$ be not characteristic so that $\sigma_\xi(\cdot) = \pi_0(\,\cdot\,\otimes \xi)$ defines a linear injection of $V$ into $F^{(0)}$. Denote by $B_\xi:F^{(0)} \to V$ the obvious linear extension to $F^{(0)}$ of the left inverse of $\sigma_\xi$. For every $\eta \in V^*$ set  $P_\xi(\eta)=\det (B_\xi \circ \sigma_\eta)$ where  $\sigma_\eta(\cdot) = \pi_0(\,\cdot\, \otimes \eta)$. Then we have $P_\xi(\xi)=1$ and $P_\xi \in S^n(V)$ where $n=\dim V$. Moreover, we prove the following.

\begin{lemma} $P_\xi \in$ {\scriptsize\calligra I}  \end{lemma}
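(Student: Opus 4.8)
The plan is to recast everything in terms of constant--coefficient differential operators acting on polynomial solutions of the symbol system, and to finish with a Cramer/adjugate identity. First I would unwind the definition of annihilation: proving $P_\xi\in${\scriptsize\calligra I} amounts to showing that $v^*\otimes(Q\odot P_\xi)$ vanishes on $\Lie{g}^{(k)}$ for every $v^*\in V^*$, every $Q\in S(V)$, and the matching $k$. Using the identifications of $S^{k+1}(V^*)$ with homogeneous polynomials and the pairing \eqref{Npair}, I would record the elementary fact that, for $T\in\Lie{g}^{(k)}\subset V\otimes S^{k+1}(V^*)$ and $R\in S^{k+1}(V)$, one has $\langle T, v^*\otimes R\rangle=\langle v^*\circ g_T, R\rangle$, where $g_T(w)=T(w,\dots,w)$ is the associated $V$--valued homogeneous polynomial map and $v^*\circ g_T\in S^{k+1}(V^*)$; moreover this pairing is a fixed nonzero multiple of $R(\partial)(v^*\circ g_T)$, the constant--coefficient operator $R(\partial)$ applied to the polynomial $v^*\circ g_T$. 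Thus the entire problem becomes the single assertion $(Q\odot P_\xi)(\partial)\,(v^*\circ g_T)=0$.

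The structural input is that every prolongation element is a polynomial solution of the first--order symbol system. Writing $\sigma_\partial$ for the operator $f\mapsto\pi_0(Df)$, whose symbol in the codirection $\eta$ is exactly $\sigma_\eta$, I would verify that $\sigma_\partial g_T=0$: indeed the differential $Dg_T(w)$ is, up to the factor $k+1$, the endomorphism $u\mapsto T(w,\dots,w,u)$, which by the defining property of $\Lie{g}^{(k)}$ lies in $\Lie{g}^{(0)}=\ker\pi_0$, so $\pi_0(Dg_T)\equiv0$. Since $\xi$ is not characteristic, $B_\xi$ is a left inverse of $\sigma_\xi$, so $A_\eta=B_\xi\sigma_\eta$ defines a matrix of commuting first--order operators $A_\partial$ with $A_\xi=\id$ (whence $P_\xi(\xi)=\det A_\xi=1$, confirming in particular that $P_\xi$ is nonconstant, i.e.\ lies in $S^+(V)$). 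Now $A_\partial g_T=B_\xi(\sigma_\partial g_T)=0$, and because the entries of $A_\partial$ are commuting constant--coefficient operators the adjugate identity $\operatorname{adj}(A_\partial)\,A_\partial=\det(A_\partial)\,\id$ gives $\det(A_\partial)\,g_T=0$, that is $P_\xi(\partial)\,g_T=0$. Applying $v^*$ and then $Q(\partial)$, and using that all these operators commute, yields $(Q\odot P_\xi)(\partial)(v^*\circ g_T)=Q(\partial)\,v^*\!\big(\det(A_\partial)g_T\big)=0$, which is precisely the required annihilation for all admissible $k$ at once.

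I expect the main obstacle to be the bookkeeping of the first two reductions rather than the closing computation: one must set up carefully the dictionary between $\Lie{g}^{(k)}$ and the degree--$(k+1)$ polynomial solutions of $\sigma_\partial f=0$, and check that the natural pairing with $v^*\otimes R$ is genuinely the operator $R(\partial)$ acting on $v^*\circ g_T$, tracking the symmetrisation constants coming from \eqref{Npair}. Once this translation is in place, the non-characteristic hypothesis enters in exactly one place, namely in producing the left inverse $B_\xi$ and hence the operator matrix $A_\partial$ that annihilates every solution, and the vanishing then follows formally from Cramer's rule for commuting operators.
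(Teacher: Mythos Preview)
Your argument is correct, and in its core step---the adjugate identity for $A_\eta=B_\xi\sigma_\eta$---it coincides with what the paper does. The packaging, however, is genuinely different. The paper stays inside the tensor formalism: it expands the classical adjugate $L_{\eta,\xi}=\operatorname{adj}(B_\xi\sigma_\eta)$ as $\sum_{|I|=n-1}\langle e^I,\eta^{n-1}\rangle\,Y_I(\xi)$, computes $\langle v\otimes\eta^n,\omega\otimes P_\xi\rangle$ by pushing $B_\xi$ and $Y_I(\xi)$ across the pairing as adjoints, and then invokes polarisation to identify $\omega\otimes P_\xi$ explicitly as an element of $\pi_0^*(F^{(0)*})\otimes S^{n-1}(V)\cap V^*\otimes S^n(V)=\pi_{n-1}^*(F^{(n-1)*})$; the extra $S(V)$ factor is then absorbed by the module structure of $\pi^*(F^*)$. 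Your route instead passes through the Spencer picture: you identify $\Lie g^{(k)}$ with degree-$(k{+}1)$ polynomial solutions of the first-order system $\sigma_\partial f=0$, interpret the annihilation condition as vanishing of $(Q\odot P_\xi)(\partial)$ on every such solution, and then apply Cramer's rule in the commutative ring of constant-coefficient operators to get $P_\xi(\partial)g_T=0$ in one stroke.

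What each buys: the paper's computation exhibits $\omega\otimes P_\xi$ \emph{constructively} in the image $\pi^*(F^*)$, which is the exact dual of the earlier lemma and keeps all bookkeeping in the pairing~\eqref{Npair} already set up; your version is shorter and more conceptual, making transparent why non-characteristic $\xi$ suffices (it is precisely the condition that the operator matrix $A_\partial$ is elliptic in the direction $\xi$, hence has a determinant annihilating all polynomial solutions), and it handles the extra symmetric factor $Q$ by a trivial composition $Q(\partial)$ rather than a separate module argument. The ``obstacle'' you flag---the dictionary between the pairing and $R(\partial)$---is routine: with the normalisation in~\eqref{Npair} one has $\langle T,v^*\otimes R\rangle=\langle v^*\!\circ g_T,R\rangle$ exactly, and this equals $R(\partial)(v^*\!\circ g_T)$ up to a fixed nonzero combinatorial constant depending only on $k$, which is all you need.
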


\begin{proof}

We begin by observing that the map $\eta \to B_\xi \circ \sigma_\eta$ is linear and so $$B_\xi \circ \sigma_\eta=\sum_\ell \eta({\textstyle e_\ell})B_\xi \circ \sigma_{\varepsilon_\ell}=\sum_\ell \eta({\textstyle e_\ell})Q^\ell_\xi. $$

For every $\eta \in V^*$ let $L_{\eta,\xi}$ denote the classical adjoint or adjugate of $B_\xi \circ \sigma_\eta$, that is $L_{\eta,\xi} \circ B_\xi \circ \sigma_\eta (v) =P_\xi(\eta)v.$ If $A(j|i)$ denotes the matrix obtained by deleting the $j$-th row and $i$-th column of $A$, then we have the explicit formulae 
$$
[L_{\eta,\xi}]_{ij}=(-1)^{i+j}\det (B_\xi \circ \sigma_\eta(j|i)) =(-1)^{i+j}\det\sum_\ell \eta(e_\ell)Q^\ell_\xi(j|i).
$$
 Since  $\eta({\textstyle e_1})^{i_1} \dots \eta({\textstyle e_n})^{i_n} = \langle  e^I, \eta^{|I|}\rangle $ where $I=(i_1,\dots, i_n)$ and $|I|=\sum_\ell i_\ell$,  it follows that $L_{\eta,\xi}$ has the form
$$L_{\eta,\xi}= \sum_{|I|= n-1} \langle  e^I, \eta^{|I|}\rangle \, Y_I(\xi)$$ where the matrices $ Y_I(\xi)$ do not depend on $\eta$.

If $v \in V$ and $\omega \in V^*$ are arbitrary, then it follows that
\begin{align*}
\langle v \otimes  \eta^{n}, \omega  \otimes  P_\xi \rangle&= \langle P_\xi(\eta)v  , \omega   \rangle = \langle L_{\eta,\xi} \circ B_\xi \circ \sigma_\eta (v)\,  ,\, \omega   \rangle \\
&= \sum_{|I|= n-1} \langle e^I, \eta^{n-1}\rangle \big \langle Y_I(\xi) \circ B_\xi \circ \sigma_\eta (v)\, , \, \omega  \big  \rangle \\
&= \sum_{|I|= n-1}    \langle e^I, \eta^{n-1}\rangle \big  \langle  \sigma_\eta (v)\, ,\, B_\xi^*  \circ Y_I(\xi)^* (\omega)  \big  \rangle \\
&= \sum_{|I|= n-1}  \langle e^I, \eta^{n-1}\rangle \big \langle  \pi_0(v \otimes \eta)\, ,\, B_\xi^*  \circ Y_I(\xi)^* (\omega)  \big  \rangle \\
&=\sum_{|I|= n-1}  \langle e^I, \eta^{n-1}\rangle \big \langle  v \otimes \eta\, ,\, \pi_0^*(B_\xi^*  \circ Y_I(\xi)^* (\omega) ) \big  \rangle \\
&= \sum_{|I|= n-1}  \big \langle  v \otimes \eta^n \, ,\, \pi_0^*(B_\xi^*  \circ Y_I(\xi)^* (\omega)) \otimes e^I \big  \rangle\\
&=  \big \langle  v \otimes \eta^n \, ,\,  \sum_{|I|= n-1} \pi_0^* (B_\xi^*  \circ Y_I(\xi)^* (\omega)) \otimes e^I \big  \rangle.
\end{align*}
It now follows by polarisation that
  $$
\omega  \otimes  P_\xi  =  \sum_{|I|= n-1} \pi_0^*(B_\xi^*  \circ Y_I(\xi)^* (\omega)) \otimes e^I
$$ 
and, since $B_\xi^*  \circ Y_I(\xi)^* (\omega) \in F^{(0)*}$, we conclude that 
$$
\omega  \otimes  P_\xi \, \in \, \pi_0^*(F^{(0)*}) \otimes S^{n-1}(V) \cap V^* \otimes S^{n}(V)=\pi_{n-1}^*(F^{(n-1)*}).
$$
Finally, observing that
$$
V^* \otimes S(V) \odot P = V^* \otimes P \odot S(V),
$$
it now follows  that $P\in$ {\scriptsize\calligra I}\,\,, as desired.
\end{proof}


We recall the Nullstellensatz.  Let ${\mathbb K}$ be an algebraically closed field and let $R[x_1,\dots,x_n]$ be the ring of polynomials over ${\mathbb K}$ in $n$ indeterminates $x_1,\dots,x_n$.  Given an ideal $J\subseteq R[x_1,\dots,x_n]$ we define
$$
V(J)=\{y \in K^n \ |\ p(y)=0 \;\; {\rm for \; all} \;\; P \in J\}.
$$
Then we denote by $I(V(J))$  the ideal of all polynomials vanishing on  $V(J)$ and we write
$$
\sqrt{J}=\{r\in R\ |\ r^t \in J\ \hbox{for some positive integer}\ t\}
$$
for the radical of $J$.
Hilbert's Nullstellensatz states that  $I(V(J))=\sqrt{J}$ for every ideal $J$.

Back to our situation, let us suppose {\scriptsize\calligra V} $\,=\emptyset$. Then we have  $V(${\scriptsize\calligra I} $\,)=\{0\}$ because  $V(${\scriptsize\calligra I} $\,)=$ \!{\scriptsize\calligra V}\,\, $\, \, \cup \,\,\{0\}$,  and the Nullstellensatz implies that the radical of {\scriptsize\calligra I}\, \,  is $S^+(V)$. In particular, for $t$ sufficiently large, $ (e_j)^t \in $ {\scriptsize\calligra I} \, for all $j=1,\dots,m$. For $k$ large enough, every element of  $S^{k+1}(V)$ is the sum of terms of the form $Q_j \odot \left ({e_j}\right)^t  $ where $Q_j \in S^{k+1-t}(V)$, whence $V^* \otimes S^{k+1}(V) \subseteq  \pi^*(F^{(k)*})$. We then conclude that $\Lie{g}^{(k)*}=\{0\}$ and $\Lie{g}^{(k)}=\{0\}$.

We showed that if $\Lie{g}^{(0)}$ does not contain elements of rank one, then it is of finite type. Conversely, Example~\ref{ex2}
 shows that if $\xi$ is a characteristic and therefore $w\otimes \xi$ is a rank one element of $\Lie{g}^{(0)}$, we obtain  $w\otimes \xi^{\ell+1} \in \Lie{g}^{(\ell)}$ for every $\ell >0$, whence $\Lie{g}^{(0)}$ is of infinite type.

\subsection{Complex vs Real}\label{complexification}

Let $V$ be a real $n$-dimensional vector space and let $V_\C$ denote the complexification of $V$. In this section we prove that $\Lie{g}^{(0)} \in \Lie{gl}(V)$ is of infinite type if and only if the complexified algebra $\Lie{g}^{(0)}_\C =\Lie{g}^{(0)}+i \Lie{g}^{(0)} \in \Lie{gl}(V_\C) $ is of infinite type. Obviously this result implies that $\Lie{g}^{(0)}$ is of finite type if and only if $\Lie{g}^{(0)}_\C$ is of finite type.

Assume $T$ is a nonzero element of $\Lie{g}^{(k)}$.  Given a point $w=(u_1+iv_1,\dots,u_{k+1}+ iv_{k+1} ) \in V_\C^{k+1}$, set $$T_\C(w)= \sum_{r \in I(w)}i^{\mu(r)} T(r )$$ where $$I(w)= \{ r=(r_1, \dots ,r_{k+1}) \, \, | \,\, r_j \in \{u_j,v_j\}\, \}$$ and $\mu(r)=$ the number of $v_j$'s in $r$. It follows that $T_\C(w) \in V_\C \otimes S^{k+1}(V_\C)$, moreover
$$w_1\lrcorner \dots w_k \lrcorner  T_\C(w_{k+1})= \sum_{r \in I(w)}i^{\mu(r)} r_1\lrcorner \dots r_k \lrcorner T(r_{k+1} ) \, \in \, \Lie{g}^{(0)}+i \Lie{g}^{(0)} $$ and we conclude that  $T_\C \in \Lie{g}_\C^{(k)}$.

Assume $T$ is a nonzero element of $ \Lie{g}_\C^{(k)}$. Furthermore, let $\iota:V \hookrightarrow V_\C$ be the inclusion given by $\iota(u)=u+i0$. We note that ${\rm Re} T$ and ${\rm Im}T$ are not complex multilinear maps of $V_\C^{k+1}$, however  ${\rm Re}T \circ \iota$ and ${\rm Im}T\circ \iota$ are real multilinear maps of $V$. For any $k$-tuple $w=(w_1,\dots,w_k)\in V_\C^k$ we also have
\begin{align*}
w_1 \lrcorner \dots w_k \lrcorner {\rm Re}T &= {\rm Re}( w_1 \lrcorner \dots w_k \lrcorner T) \in  \Lie{g}^{(0)},
\end{align*} and it follows that
\begin{align*}
u_1 \lrcorner \dots u_k \lrcorner {\rm Re}T \circ \iota = {\rm Re}( \iota(u_1) \lrcorner \dots \iota(u_k) \lrcorner T ) \in  \Lie{g}^{(0)},
\end{align*}
with the same conclusions applying to ${\rm Im}T$.

Furthermore, $T$ is a complex linear combination of elements in the set $$\{ T(e_{j_1}, \dots ,e_{j_{k+1}})\ | \ e_i=\iota(e_i) \},$$ and so $T \circ \iota$ cannot be identically zero since $T$ is not  identically zero . Hence it follows that at least one of the real multilinear maps $ {\rm Re}T \circ \iota :V \to V$ or $ {\rm Im}T \circ \iota:V \to V$  defines a nonzero element of $\Lie{g}^{(k)}$.

\subsection{Tanaka prolongation of stratified nilpotent Lie algebras}\label{Tan}

Consider  a stratified nilpotent Lie algebra $\Lie{n}=\Lie{g}_{-s} \oplus \dots \oplus \Lie{g}_{-1} $.  The Tanaka prolongation of $\Lie{n}$ is the graded Lie algebra $\Lie{g}(\Lie{n})$ given by the direct sum $\Lie{g}(\Lie{n})= \bigoplus_{k \in \Z} \Lie{g}_k(\Lie{n}), $ where $ \Lie{g}_k(\Lie{n})=\{0\}$ for $k < -s$, $\Lie{g}_k(\Lie{n})=\Lie{g}_k$ for $-s \leq k \leq -1$, and for each $k \geq 0$, $\Lie{g}_k(\Lie{n})$ is inductively defined by
$$\Lie{g}_k (\Lie{n})=\Big \{  \varphi \in \bigoplus_{p<0} \Lie{g}_{p+k}(\Lie{n}) \otimes \Lie{g}_p^* \ | \ \varphi([X,Y])=[\varphi(X),Y]+[X,\varphi(Y)] \Big \},$$
with $\Lie{g}_0(\Lie{n})$ consisting of the strata preserving derivations of $\Lie{n}$. If  $\varphi \in \Lie{g}_k(\Lie{n})$, where $k \geq 0$, then the condition in the definition becomes the Jacobi identity upon setting $[\varphi,X]=\varphi(X)$ when $X \in \Lie{n}$. Furthermore, if $\varphi \in \Lie{g}_k(\Lie{n})$ and $\varphi^\prime \in \Lie{g}_\ell(\Lie{n})$, where $k,\ell \geq 0$, then $[\varphi,\varphi^\prime] \in \Lie{g}_{k+\ell}(\Lie{n})$ is defined inductively according to the Jacobi identity, that is
$$ [\varphi,\varphi^\prime](X)=[\varphi,[\varphi^\prime,X]]-[\varphi^\prime,[\varphi,X]].$$

In \cite{tanak1}, Tanaka shows that $\Lie{g}(\Lie{n})$ determines the structure of the contact vector fields on the group $N$ with Lie algebra $\Lie{n}$. In particular, there is an isomorphism between the set of contact vector fields and $\Lie{g}(\Lie{n}) $ when the latter is finite dimensional. Also, $\Lie{g}(\Lie{n}) $ is finite dimensional if and only if  $\Lie{g}_k(\Lie{n})=\{0\}$ for some $k \geq 0$ since  $\Lie{g}_k(\Lie{n})=\{0\}$ implies  $\Lie{g}_{k+\ell}(\Lie{n})=\{0\}$ for all $\ell >0$. Hence the rigidity of a stratified nilpotent Lie group can be determined by studying the Tanaka prolongation of the Lie algebra.

For a given subalgebra $\Lie{g}_0 \subset \Lie{g}_0(\Lie{n})$, the prolongation of the pair $(\Lie{n},\Lie{g}_0)$ is defined as the graded subalgebra
$${\rm Prol}(\Lie{n},\Lie{g}_0)= \Lie{n} \oplus \bigoplus_{k \geq 0} \Lie{g}_k \subset \Lie{g}(\Lie{n}),$$ where for each $k \geq 1$, $\Lie{g}_k$ is inductively defined as the subspace of $ \Lie{g}_k(\Lie{n})$ satisfying $[\Lie{g}_k,\Lie{g}_{-1}] \subseteq \Lie{g}_{k-1}$.  The pair $(\Lie{n},\Lie{g}_0)$ is said to be of finite type if $\Lie{g}_k=\{0\}$ for some $k$, otherwise it is of infinite type and $\Lie{g}(\Lie{n})$ is infinite dimensional.

The type of $(\Lie{n},\Lie{g}_0)$ is determined by the subalgebra $$\Lie{h}= \bigoplus_{k \geq -1} \Lie{h}_k \subset {\rm Prol}(\Lie{n},\Lie{g}_0)$$ where the subspaces $\Lie{h}_k \subset \Lie{g}_k$ are defined as follows: let $$\hat {\Lie{n}}=\Lie{g}_{-s}\oplus \dots\oplus \Lie{g}_{-2} $$ and for $k \geq -1$ define
$$\Lie{h}_k=\left \{  u \in \Lie{g}_k \, | \, [u,
\hat{\Lie{n}}]=\{0\}  \right\}.$$
It follows that $[\Lie{h}_k,\Lie{g}_{-1}] \subset \Lie{h}_{k-1}$ for $k \geq 0$. The bracket generating property shows that $\Lie{h}_0$ identifies with a subalgebra $\Lie{h}^{(0)} \subseteq  \Lie{gl}(\Lie{g}_{-1})$, moreover $\Lie{h}_k$ identifies with  $\Lie{h}^{(k)}$ and by \cite[Corollary 2, page~76]{tanak1} $(\Lie{n},\Lie{g}_0)$ is of infinite type if and only if $\Lie{h}^{(0)}$ is of infinite type in the usual sense.
We extend the notation by writing $\Lie{h}_k(\Lie{n})$ and $\Lie{h}^{(k)}(\Lie{n})$ when $\Lie{g}_0=\Lie{g}_0(\Lie{n})$, and summarise Tanaka's result as follows.
\begin{theorem}\cite[Corollary 2, page~76]{tanak1}\label{Tanaka}
 The space of $C^\infty$ contact vector fields on $N$ is finite dimensional if and only if $\Lie{h}^{(0)}(\Lie{n})$ is of finite type in the usual sense.
\end{theorem}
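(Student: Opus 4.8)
The plan is to assemble the chain of equivalences prepared in this subsection, the only substantive input being Tanaka's structure theorem, which I would quote rather than reprove. First I would invoke the correspondence of \cite{tanak1} between germs of $C^\infty$ contact vector fields and the full Tanaka prolongation $\Lie{g}(\Lie{n})$, which in particular makes the space of contact vector fields finite dimensional precisely when $\Lie{g}(\Lie{n})$ is. By the nesting property $\Lie{g}_k(\Lie{n})=\{0\}\Rightarrow\Lie{g}_{k+\ell}(\Lie{n})=\{0\}$ for all $\ell>0$, the latter holds if and only if $\Lie{g}_k(\Lie{n})=\{0\}$ for some $k\geq 0$.

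Next I would identify $\Lie{g}(\Lie{n})$ with the prolongation of the pair $(\Lie{n},\Lie{g}_0(\Lie{n}))$. For the maximal choice $\Lie{g}_0=\Lie{g}_0(\Lie{n})$ the constraint $[\Lie{g}_k,\Lie{g}_{-1}]\subseteq\Lie{g}_{k-1}$ cutting out $\text{Prol}(\Lie{n},\Lie{g}_0)$ is automatically satisfied by every $\varphi\in\Lie{g}_k(\Lie{n})$, since $\varphi$ raises degree by $k$ and $[\varphi,X]=\varphi(X)\in\Lie{g}_{k-1}(\Lie{n})$ for $X\in\Lie{g}_{-1}$; an easy induction on $k$ then gives $\text{Prol}(\Lie{n},\Lie{g}_0(\Lie{n}))=\Lie{g}(\Lie{n})$. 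Hence $\Lie{g}(\Lie{n})$ is finite dimensional if and only if the pair $(\Lie{n},\Lie{g}_0(\Lie{n}))$ is of finite type.

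Finally I would pass to the subalgebra $\Lie{h}=\bigoplus_{k\geq -1}\Lie{h}_k$ that governs the type of the pair, where $\Lie{h}_k=\{u\in\Lie{g}_k\mid[u,\hat{\Lie{n}}]=\{0\}\}$. The bracket generating property identifies $\Lie{h}_0(\Lie{n})$ with a subalgebra $\Lie{h}^{(0)}(\Lie{n})\subseteq\Lie{gl}(\Lie{g}_{-1})$ and, inductively, $\Lie{h}_k(\Lie{n})$ with its $k$-th Singer--Sternberg prolongation $\Lie{h}^{(k)}(\Lie{n})=\Lie{h}^{(0)}(\Lie{n})\otimes S^k(\Lie{g}_{-1}^*)\cap\Lie{g}_{-1}\otimes S^{k+1}(\Lie{g}_{-1}^*)$ in the sense of Subsection~\ref{SSprol}. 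Tanaka's \cite[Corollary 2, page~76]{tanak1} then asserts that $(\Lie{n},\Lie{g}_0(\Lie{n}))$ is of infinite type if and only if $\Lie{h}^{(0)}(\Lie{n})$ is of infinite type; equivalently, the pair is of finite type if and only if $\Lie{h}^{(0)}(\Lie{n})$ is. Chaining this with the two equivalences above yields the statement.

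The substantive content, and hence the main obstacle, lies entirely in the cited \cite[Corollary 2]{tanak1}: the reduction of the infinite abstract prolongation of the whole filtered algebra $\Lie{n}$ to the single linear datum $\Lie{h}^{(0)}(\Lie{n})\subseteq\Lie{gl}(\Lie{g}_{-1})$ and its Singer--Sternberg tower. Granting this, everything else is bookkeeping on the graded pieces together with the upward propagation of finiteness through the grading. The one point deserving care is checking that the identifications $\Lie{h}_k(\Lie{n})\cong\Lie{h}^{(k)}(\Lie{n})$ are genuinely compatible with the Singer--Sternberg prolongation of Subsection~\ref{SSprol}, that is, that the vanishing condition $[u,\hat{\Lie{n}}]=\{0\}$ combined with the Jacobi identity reproduces exactly the intersection defining $\Lie{h}^{(k)}(\Lie{n})$.
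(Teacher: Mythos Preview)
Your proposal is correct and matches the paper's treatment: the paper does not prove this theorem at all but simply records it as Tanaka's result, with the preceding paragraphs of Section~\ref{Tan} supplying exactly the identifications you spell out (contact vector fields $\leftrightarrow\Lie{g}(\Lie{n})$, $\Lie{g}(\Lie{n})={\rm Prol}(\Lie{n},\Lie{g}_0(\Lie{n}))$, and $\Lie{h}_k(\Lie{n})\cong\Lie{h}^{(k)}(\Lie{n})$). You have made explicit the bookkeeping the paper leaves in prose and correctly located the only nontrivial step in the cited \cite[Corollary~2]{tanak1}.
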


\subsection{Tanaka prolongation of contact vector fields}\label{prolfield}

In this section we focus on how the space of contact vector fields on $N$ is related to $\Lie{g}(\Lie{n})$, the Tanaka prolongation algebra through $\Lie{g}_0(\Lie{n})$.
We shall need this construction in Section~\ref{Liouville}, where we investigate the conformal vector fields which form a subalgebra of the Lie algebra of contact vector fields.
In particular, we shall see that the Lie algebra of conformal vector fields is isomorphic to the Tanaka prolongation through a suitable subalgebra of $\Lie{g}_0(\Lie{n})$.

The argument we propose here follows \cite[Section 2]{yam}. Given  a contact vector
field $U$ on $N$, we shall  define correspondingly a section of the bundle $N \times
\Lie{g}(\Lie{n}) \to N$.

Set $\Lie{n}=\Span{X_{-i,k}\ | \ i=1,\dots,s, \ \  k=1,\dots,d_i}$, where $s$ is the step of $\Lie{n}$ and $d_i$ is the dimension of $\Lie{g}_{-i}$.
Then the Lie algebra of left invariant fields
is spanned by the vector fields  $\tilde X_{-i,k}=(\tau_p)_*(X_{-i,k})$. Using this basis we write any two vector fields $U$ and $W$ as
$$
U=\sum_{i,k}u_{-i,k}\tilde X_{-i,k}\qquad W=\sum_{j,l}w_{-j,l}\tilde X_{-j,l}.
$$

We define the $\Lie{n}$--valued function $A_U$ as
$$
A_U(p) =\sum_{i,k}u_{-i,k}(p)X_{-i,k}
$$
and $A_W$ accordingly. We then have the equation
\begin{align}
A_{[U,W]}(p)&= [A_U(p),A_W(p) ]+U(A_W)(p)-W(A_U)(p), \label{Aident}
\end{align} where for example $U(A_W)(p)=\sum_{i,k}(Uw_{-i,k})(p)X_{-i,k}$.

If $U$ is a contact vector field, and $W(p)=\tilde X_{-1}(p)=(\tau_p)_*(X_{-1})$ where $X_{-1} \in \Lie{g}_{-1}$, then $A_{\tilde X}(p)=X_{-1}$ and (\ref{Aident}) becomes 
\begin{align*}
A_{[U,\tilde X_{-1}]}(p)&= [A_U(p),X_{-1} ]-\tilde X(A_U)(p).
\end{align*}
Since $A_{[U,\tilde X_{-1}]}(p) \in \Lie{g}_{-1}$, the previous equality implies that
\begin{align}
 \tilde X_{-1}(A_U^{-m})(p)=[A_U^{-m+1}(p), X_{-1}]\label{cntct2}
\end{align}for all $m>1$.
If $\tilde Y_{-1} $ is another horizontal
left invariant vector field, then (\ref{cntct2}) implies
\begin{align*}
 \tilde Y_{-1} \tilde X_{-1}(A_U^{-m})(p)&=[\tilde
Y_{-1}(A_U^{-m+1})(p), X_{-1}] \\
&=[[A_U^{-m+2}(p), Y_{-1}], X_{-1}]
\end{align*}
for all $m>2$, moreover
\begin{align*}
 [\tilde Y_{-1}, \tilde X_{-1}](A_U^{-m})(p)&=[[[A_U^{-m+2}(p), Y_{-1}], X_{-1}] -  [[A_U^{-m+2}(p), X_{-1}], Y_{-1}] \\
&=[ A_U^{-m+2}(p),[ Y_{-1},X_{-1}]]
\end{align*} for all $m>2$.
By the bracket generating property we conclude that
\begin{align}
\tilde X_{-r}(A_U^{-m})(p) &=[ A_U^{-m+r}(p), X_{-r}],  \quad
\forall   X_{-r} \in  {\Lie{g}}_{-r} \quad {\rm s.t.}
\quad r<m. \label{yamstuff1}
\end{align}
Define now $A_U^0:N \to \bigoplus_{\ell<0}  {\Lie{g}}_\ell(\Lie{n}) \otimes
 {\Lie{g}}_\ell^*$  by setting
\begin{align*}
A_U^0(p)( X_{-r})=\tilde X_{-r} (A_U^{-r})(p), \quad 
X_{-r} \in  {\Lie{g}}_{-r}.
\end{align*}
For $ X_{-r} \in  {\Lie{g}}_{-r}$ and
$Y_{-t} \in  {\Lie{g}}_{-t}$ we have
\begin{align*}
A_U^0(p)([Y_{-t}, X_{-r}])
&=[\tilde Y_{-t},\tilde X_{-r}] (A_U^{-(r+t)})(p)\\
&=\tilde Y_{-t} \tilde X_{-r} (A_U^{-(r+t)})(p)-\tilde X_{-r} \tilde
Y_{-t} (A_U^{-(r+t)})(p)\\
&=[\tilde Y_{-t} (A_U^{-t})(p),  X_{-r}]-[\tilde X_{-r}
(A_U^{-r})(p), Y_{-t}]\\
&=[ A_U^{0}(p)( Y_{-t}),  X_{-r}]-[ A_U^{0}(p)(
X_{-r}), Y_{-t}]\\
&=[ A_U^{0}(p)( Y_{-t}),  X_{-r}]+[ 
Y_{-t},A_U^{0}(p)( X_{-r} )],
\end{align*}  that is $A_U^0(p)$ is a strata preserving
derivation of $ {\Lie{n}}$. Furthermore, the previous
equality is simply the Jacobi identity when we define
\begin{align*}
[A_U^0(p), X]:=A_U^0(p)( X)
\end{align*}
for any  $X\in\Lie{n}$. In summary $A_U^0(p) \in
{\Lie{g}}_0(\Lie{n})$ where $$ {\Lie{g}}_0(\Lie{n}) = \Big \{ \varphi \in
\bigoplus_{\ell<0}  {\Lie{g}}_\ell (\Lie{n}) \otimes  
{\Lie{g}}_\ell^*\ | \  \varphi([ Y,  X])=  [\varphi(
Y),  X])+[ Y,  \varphi( X)])\Big \}.$$
Inductively on $n=0,1,2,\dots$, we define  $A_U^n:N \to \bigoplus_{\ell<0}
{\Lie{g}}_{\ell+n}(\Lie{n})  \otimes   {\Lie{g}}_\ell^*$  by
setting
\begin{align}
A_U^n(p)( X_{-r})= \tilde X_{-r} (A_U^{-r+n})(p) \quad \text{when
}  X_{-r} \in {\Lie{g}}_{-r}. \label{yamstuff2}
\end{align} 
At each step of the inductive construction, for the case
$r=1$, we define $\tilde X (U_{-1+n})(p)$  relative to the basis of 
${\Lie{g}}_{n-1}(\Lie{n})$ induced by the chosen basis of $\Lie{n}$ combined with the corresponding dual basis.  It follows that
\begin{align*}
A_U^n(p)([ Y_{-t}, X_{-r}])&=[\tilde Y_{-t},\tilde X_{-r}] (A_U^{-(r+t)+n})(p)\\
&=\tilde Y_{-t} \tilde X_{-r} (A_U^{-(r+t)+n})(p)-\tilde X_{-r} \tilde
Y_{-t} (A_U^{-(r+t)+n})(p)\\
&=[\tilde Y_{-t} (A_U^{-t+n})(p),  X_{-r}]-[\tilde X_{-r}
(A_U^{-r+n})(p), Y_{-t}]\\
&=[ A_U^{n}(p)( Y_{-t}),  X_{-r}]-[ A_U^{n}(p)(
X_{-r} ), Y_{-t}]\\
&=[ A_U^{n}(p)( Y_{-t}),  X_{-r}]+[ 
Y_{-t},A_U^{n}(p)( X_{-r} )],
\end{align*} which becomes the Jacobi identity when we set
\begin{align}
[A_U^n(p),  X_{-r}]:=A_U^n(p)( X_{-r}).\label{yamstuff3}
\end{align} In summary $ A_U^n(p) \in  {\Lie{g}}_n (\Lie{n})$ where
$$
{\Lie{g}}_n (\Lie{n})= \Big \{ \varphi \in \bigoplus_{\ell<0} 
{\Lie{g}}_{\ell+n} (\Lie{n}) \otimes   {\Lie{g}}_\ell ^* \ | \
\varphi([ Y,  X])=  [\varphi( Y),  X])+[
Y,  \varphi( X)])\Big \}.
$$ 
For $A_W^m(p) \in 
{\Lie{g}}_m (\Lie{n})$ and $A_U^n(p) \in  {\Lie{g}}_n (\Lie{n})$ we inductively
define $[A_W^m(p),A_U^n(p)] \in  {\Lie{g}}_{m+n} (\Lie{n}) $ according to
the Jacobi identity, that is  for every 
$X\in\Lie{n}$ we set
\begin{align*}
[A_W^m(p),A_U^n(p)](\ X)&=[[A_W^m(p),A_U^n(p)], X]\\
&=[A_W^m(p),[A_U^n(p), X]]-[A_U^n(p),[A_W^m(p), X]].
\end{align*}
Thus for every contact vector field $U$ we obtain
 a section of the bundle $N \times
\Lie{g}(\Lie{n}) \to N$, that is $\sum_{n\geq -s}A_U^n$.

 \section{Proof of Theorem~\ref{main1}}\label{proof1}
{(ii) $\Rightarrow$ (iii).}
Let $\varphi_\C$ be  an element of rank $1$ in the complexification of $\Lie{h}_{0}(\Lie{n})$. Corresponding to $\varphi_\C$  there exists a basis  ${\mathcal B}$ of $\Lie{n}_\C$, the complexification of $\Lie{n}$, and vectors $X_\C,Y_\C\in{\mathcal B}$ in the complexification of $\Lie{g}_{-1}$ (not necessarily distinct), unique up to a scale, such that $\varphi_\C(Y_\C)=X_\C$ and $\varphi_\C(Z)=0$ for every   $Z_\C\in{\mathcal B}\setminus \{Y_\C\}$. For such a $Z_\C$ we have
$$
0=\varphi_\C[Y_\C,Z_\C]=[\varphi_\C Y_\C,Z_\C]+[Y_\C,\varphi_\C Z_\C]=[X_\C,Z_\C]
$$
We conclude that ${\rm rank}\,\ad X \leq 1$.

{(iii) $\Rightarrow$ (ii).}
If $X_\C$ is in the complexification of $\Lie{g}_{-1}$ and $\ad X_\C$ has rank zero, then $\Lie{n}$ is degenerate and it suffices to take the derivation $\varphi_\C$ defined by $\varphi_\C(X)=X$ and zero otherwise.
Suppose now that   $X_\C$ is such that ${\rm rank}\,\ad X_\C =1$.
Following~\cite{otz1},  it is possible to complete $X_\C$ to a basis ${\mathcal B}$ of $\Lie{n}_\C$, in such a way that there exists a unique $Y_\C\in {\mathcal B}$  for which $[X_\C,Y_\C]\neq 0$, moreover $Y_\C$ is in the complexification of $\Lie{g}_{-1}$.
It follows directly that the endomorphism $\varphi_\C:\Lie{n}_\C\rightarrow \Lie{n}_\C$ defined by $\varphi_\C(Y_\C)=X_\C$ and zero elsewhere is an element  of rank $1$ in the complexification of $\Lie{h}_0(\Lie{n})$.

{(ii) $\Rightarrow$ (i).}
Let $\varphi_\C$ be a rank $1$ element in the complexification of $\Lie{h}_0(\Lie{n})$. Then it identifies with an element of rank $1$ in the complexification of $\Lie{h}^{(0)}(\Lie{n})$, say $\Lie{h}^{(0)}_\C(\Lie{n})$. By~\eqref{characteristic} it follows that $\Lie{h}^{(0)}_\C$ is of infinite type, and by the discussion in~\eqref{complexification} $\Lie{h}^{(0)}$ is also of infinite type.
Then Theorem~\ref{Tanaka} implies that $N$ admits infinite dimensional families of $C^\infty$   contact maps, thus $N$ is nonrigid.

{(i) $\Rightarrow$ (ii).}
We show that if condition (ii) fails, then $N$ is rigid in the class of $C^2$ maps.
First, observe that if there is no rank $1$ element in the complexification of $\Lie{h}_0(\Lie{n})$, then arguing as above we conclude that $\Lie{h}^{(0)}$ is of finite type, and so is $\Lie{g}(\Lie{n})$, the Tanaka prolongation of $\Lie{n}$. It follows that the space of contact vector fields on some open domain of $N$ forms a finite dimensional Lie algebra $\mathcal{C}(N)$ which is isomorphic to $\Lie{g}(\Lie{n})$. Denote this isomorphism by $\alpha  : \Lie{g}(\Lie{n})\rightarrow \mathcal{C}(N)$.
In particular, the contact vector fields have polynomial coefficients of uniformly bounded degree.

Let $U$ be a $C^1$ contact vector field, thus satisfying the integrability condition $[U,\mathcal{H}]\subset\mathcal{H}$.
We regularise $U$ by convolving  by a smooth compactly supported function, so that we obtain smooth approximations $U_n$ to $U$ which still satisfy the integrability condition. Then the vector fields $U_n$ are all polynomial of uniformly bounded degree, and approximate $U$ in the $C^1$ topology. It then follows that $U$ is polynomial and in the $C^\infty$ class.

Finally, take $f$ a $C^2$ contact map between open domains of $N$. By composing with left translations, we can assume that $f$ fixes the identity, namely $f(e)=e$. If $U$ is in $\alpha(\Lie{g}(\Lie{n}))$, then $f_* U$ is $C^1$, and
$$
[f_*U,\mathcal{H}]=f_*[U,f_*^{-1}\mathcal{H}]\subset f_*[U,\mathcal{H}]\subset f_*\mathcal{H}\subset \mathcal{H},
$$
whence $f_*U$ is a contact vector field and therefore it determines a unique element in $\alpha (\Lie{g}(\Lie{n}))$.
So $\alpha^{-1}f_* \alpha$ is an automorphism of $\Lie{g}(\Lie{n})$. Therefore we conclude that $f$ is induced by an automorphism of $\Lie{g}(\Lie{n})$ and so it varies in a finite dimensional space.

\section{The Liouville theorem}\label{Liouville}
In this section we prove Theorem~\ref{main2}.

Let $\mathcal{U},\mathcal{V}\subset N$ be open domains, and let $f :\mathcal{U}\rightarrow \mathcal{V}$  be a homeomorphism. For $p\in \mathcal{U}$ and for small $t\in \R$ we define the distortion as
$$
H_{f}(p,t)= \frac{{\rm max}\{d(f(p),f(q)) | d(p,q)=t\}}{{\rm min}\{d(f(p),f(q))|d(p,q)=t\}}.
$$
We say that $f$ is $K$-quasiconformal if there exists a constant $K \geq 1$ such that $$\lim \sup_{t\rightarrow 0} H_f (p,t)\leq K$$ for all $p\in \mathcal{U}$.

Quasiconformal maps are Pansu differentiable, see \cite{pansu1}, which implies they satisfy the contact conditions almost everywhere. We recall that  the Pansu differential $Df(p)$ is  the automorphism of $N$ defined as
$$
Df(p)\,q = \lim_{t\rightarrow 0} \delta_t^{-1}\circ \tau_{f(p)}^{-1}\circ f \circ \tau_p\circ \delta_t(q),
$$
where $p,q\in N$. The Lie derivative of $Df(p)$, say $df(p)$, is a grading-preserving automorphism of $\Lie{n}$. Suitably regular contact maps are Pansu differentiable. In particular $C^1$ contact maps are Pansu differentiable everywhere, see \cite{warhurst2}, and more generally almost everywhere Pansu differentiable when locally Lipschitz, see \cite{magn}.

In~\cite{CapCow}, the authors prove that $1$-quasiconformal maps are $C^\infty$ and characterised by the conditions of being locally quasiconformal and $Df(p)$ coinciding with a similarity, i.e. a product of a dilation and an isometry of the subriemannian metric. This implies in particular that $df(p)$ is a similarity when restricted to $\Lie{g}_{-1}$~\cite[Lemma 5.2]{CapCow}.
The tangent mapping $f_*$ can be represented in invariant coordinates by the matrix  $M_f(p) = ( \tau_{f(p)}^{-1} \circ f \circ \tau_p)_*|_e$, which
 coincides with $df(p)$ when restricted to $\Lie{g}_{-1}$.
 More explicitly, for all $X,Y \in \Lie{g}_{-1}$ we have
\begin{align*}
\langle  M_f(p) (X), M_f(p) (Y) \rangle &=\lambda(p) \langle X,Y \rangle
\end{align*}
and so  for every $p\in\mathcal{U}$, the restriction of  $f_*(p)$ to  the horizontal space $\mathcal{H}_p$  is given by an element in the conformal group $$CO(n)= \{M \in GL(n,\R) \,|\, M^{tr}M= \lambda I\},$$ where  $n=\dim \Lie{g}_{-1}$. In view of this fact we shall also refer to 1-quasiconformal maps as conformal maps.

Let  $f_t$ be a 1--parameter group of conformal maps defined on an open set $\mathcal{U}$, and let $U$ be the contact vector field which generates $f_t$. If $\tilde X$ is the left invariant vector field corresponding to a horizontal vector $X$, then using the notation of Section~\ref{prolfield} we write $A^0_U(p)(X)=\tilde X (A_U^{-1})(p)$. Denote $\bar M_{f_t}(p)= M_{f_t}(p)|_{\Lie{g}_{-1}}$ and ${\bar A^0_U}(p)={A^0_U}(p)|_{\Lie{g}_{-1}}$.
The following equality
$$\frac{d}{dt}\bar M_{f_t}(p)^{tr}\bar M_{f_t}(p) \Big |_{t=0}=\frac{d}{dt}\lambda_t(p) |_{t=0} I = \lambda'(p)I$$
shows that
$${\bar A^0_U}(p)^{tr}+{\bar A^0_U}(p) = \lambda'(p)I,$$
and we conclude  that the restriction of ${A^0_U}(p)$ to the horizontal space coincides with an element of $$\Lie{co}(n)=\{A\in\Lie{gl}(n,\R)\,|\, A^{tr}+A=\lambda' I \, \, {\rm for \, \, some} \,\, \lambda' \in \R \},$$ where  $n=\dim \Lie{g}_{-1}$.


It follows from~\cite[Lemma 6.3]{tanak1} and \cite[\textsection 2]{yam} that the Lie algebra of conformal vector fields is determined by the Tanaka prolongation on $\Lie{n}$ through
$\Lie{g}_{0}=\{\varphi\in\Lie{g}_0(\Lie{n}): \varphi\in \Lie{co}(n)\}$.
We illustrate with the case of the three dimensional Heisenberg group.

\begin{ex}\label{exHeis}
It is well known~\cite{kr1} that the $2n+1$--dimensional Heisenberg group is nonrigid with respect to its contact structure, whereas the conformal maps coincide with the action of some element in the Lie group $SU(1,n)$. We retrieve the result for the conformal maps at the infinitesimal level by computing the Tanaka prolongation of the conformal vector fields. The notation is that of Section~\ref{prolfield}.

Assume now that $\Lie{n}$ is the three dimensional Heisenberg algebra, and $N$ is the the corresponding Heisenberg group. We fix a basis $\{X_1,X_2,Y\}$ of $\Lie{n}$ such that $[X_1,X_2]=Y$ is the only nonzero bracket and define a scalar product for which $\Lie{g}_{-1}={\rm span}\{X_1,X_2\}$ and $\Lie{g}_{-2}={\rm span}\{Y\}$ are orthogonal.
Choose exponential coordinates on the group $(x_1,x_2,y)={\rm exp}yY{\rm exp}x_2X_2{\rm exp}x_1X_1$. We identify the Lie algebra with the tangent space $T_e N$ to $N$ at the identity $e$, and for $X$ in $\Lie{n}$ we write $\tilde{X}$ for the left--invariant vector field that agrees with $X$ at $e$. The left invariant vector fields corresponding to the basis vectors are
\begin{align*}
\tilde{X}_1 &= \frac{\partial}{\partial x_1}\qquad \tilde{X}_2 = \frac{\partial}{\partial x_2}+x_1\frac{\partial}{\partial y}\\
\tilde{Y}&=\frac{\partial}{\partial y}\,\,.
\end{align*}
Let  $U= f_1\tilde{X}_1 +f_2\tilde{X}_2+g\tilde{Y}$ be a conformal vector field. The contact conditions are
\begin{align*}
\tilde{X}_1 g=-f_2 \quad {\rm and} \quad  \tilde{X}_2 g=f_1
\end{align*}
and the conformality conditions are 
\begin{align*}
\tilde{X}_1f_1 = \tilde{X}_2 f_2  \quad {\rm and} \quad  \tilde{X}_2 f_1 = -\tilde{X}_1 f_2.
\end{align*}
Thus contact and conformality imply
\begin{align}
\tilde{X}_1^2g=\tilde{X}_2^2g  \quad {\rm and} \quad  \tilde{X}_1 \tilde{X}_2 g= -\tilde{X}_2 \tilde{X}_1g. \label{ContConf}
\end{align}
We gather some implications of (\ref{ContConf}) which will be required in the
computations. First we note that the second equation in (\ref{ContConf}) implies
\begin{align}
\tilde Y g =2\tilde X_1\tilde X_2 g  \label{Yg},
\end{align}
and the bracket conditions $[\tilde X_1,[\tilde X_1,\tilde X_2]] =[\tilde X_2,[\tilde X_1,\tilde X_2]]=0$ imply  
\begin{align}
 \tilde X_2 \tilde X_1^2g =-3\tilde X_1^2\tilde X_2 g \qquad  \tilde X_1\tilde X_2^2 g=-3\tilde X_2^2 \tilde X_1g  \label{commutes}.
\end{align}

Let $\{dx_1, dx_2, dy \}$ denote the basis of  $\Lie{n}^*$, dual to the basis we fixed for $\Lie{n}$. For every $p\in \mathcal{U}$ we have
\begin{align*}
A_U^{-1}(p)&= \tilde{X}_2g(p)X_1 - \tilde{X}_1 g(p)  X_2\\
A_U^{-2}(p)&= g(p) Y,
\end{align*}
and for each integer $k \geq 0$ we have 
\begin{align*}
A_U^{k}(p)&=A_U^{k}(p)(X_1)\otimes dx_1+A_U^{k}(p)(X_2)\otimes dx_2
+A_U^{k}(p)(Y)\otimes dy \\
&=\tilde X_1(A_U^{k-1})(p)\otimes dx_1+\tilde X_2(A_U^{k-1})(p)\otimes dx_2
+\tilde Y(A_U^{k-2})(p)\otimes dy.
\end{align*}
A direct calculation shows that
\begin{align*}
A_U^0(p)=&(\tilde Y g)(p)A_1 + (\tilde{X}_1\tilde X_1 g)(p)A_2 
\end{align*}
where
\begin{align*}
A_1 &= \frac{1}{2}{X}_1 \otimes dx_1 + \frac{1}{2}{X}_2 \otimes  dx_2+   Y \otimes dy \\
A_2 &= {X}_1\otimes  dx_2- {X}_2 \otimes dx_1.
\end{align*}
Setting $\Lie{g}_0={\rm span}\, \{
A_1,A_2\}$, we can compute the Lie brackets of $\Lie{n}\oplus \Lie{g}_0$. The
higher order prolongation spaces can then be computed in a purely symbolic way
using the Jacobi identity and the bracket generating property. The advantage in
taking this symbolic approach is that no more information is required from the
contact conditions or the conformality conditions.  
In order to compute $A_U^1(p)$ we use (\ref{ContConf}) and (\ref{commutes}). We obtain
\begin{align}
A_U^1(p)=&  (\tilde Y \tilde X_1 g)(p)B_1+  (\tilde Y \tilde X_2 g)(p) B_2 \label{firstpro}
\end{align}
where
\begin{align*}
B_1&= A_1\otimes dx_1-\frac{3}{2}A_2\otimes dx_2 - \tilde X_2\otimes dy\\
B_2&= A_1\otimes dx_2+ \frac{3}{2}A_2\otimes dx_1 + \tilde X_1\otimes dy.
\end{align*}
In particular, we have used the identities $\tilde X_1^3g=\frac{3}{2}\tilde Y \tilde X_2g$ and $\tilde X_2\tilde X_1^2g=-\frac{3}{2}Y \tilde X_1g$, which are easily derived from (\ref{ContConf}), \eqref{Yg} and (\ref{commutes}).

At the next step of prolongation  we see that
\begin{align*}
A_U^2(p)=&  ( \tilde Y \tilde{X}_1^2 g)(p)C_1 +  (\tilde Y^2 g)(p) C_2
\end{align*}
where
\begin{align*}
C_1&= B_1\otimes dx_1  +B_2\otimes dx_2+ A_2\otimes dy\\
C_2&= \frac{1}{2} B_2 \otimes dx_1 -\frac{1}{2}B_1\otimes dx_2 + A_1\otimes dy.
\end{align*}

We now see that $\tilde Y \tilde{X}_1^2 g(p)=0$ by observing that
\begin{align*}
\tilde Y\tilde X_1^2g&=\tilde X_1\tilde X_2 \tilde X_1^2g-\tilde X_2\tilde X_1 \tilde X_1^2g \nonumber\\
&=\tilde X_1\tilde X_2 \tilde X_1^2g-\tilde X_2\tilde X_1 \tilde X_2^2g  \quad {\rm by \, \,(\ref{ContConf})}\\
&=-3\tilde X_1 \tilde X_1^2\tilde X_2g +3\tilde
X_2\tilde X_2^2\tilde X_1g  \quad {\rm by \, \, (\ref{commutes})} \\
&=-3\tilde X_1^2\tilde X_1\tilde X_2g -3\tilde X_2^2\tilde X_1\tilde X_2g \quad {\rm by \, \,(\ref{ContConf})}\\
&=-\frac{3}{2}\tilde Y( \tilde X_1^2g +\tilde X_2^2 g) \quad {\rm by \, \, (\ref{Yg})} \\
&=-3\tilde Y\tilde X_1^2g \quad {\rm by \, \,(\ref{ContConf})}. 
\end{align*}

Finally, we observe using (\ref{commutes}) that $\tilde Y \tilde{X}_1^2 g=0$ implies
$$\tilde X_1 \tilde Y^2g=\tilde X_2 \tilde Y^2g= Y^3g=0,$$
since
$$
\tilde X_1 \tilde Y^2g=-\frac{2}{3}\tilde X_2 \tilde Y \tilde X_1^2g, \quad \tilde X_2 \tilde Y^2g=\frac{2}{3}\tilde X_1 \tilde Y \tilde X_1^2g \quad {\rm and} \quad  \tilde Y^3g=\frac{4}{3}\tilde X_2^2 \tilde Y \tilde X_1^2g.  
$$ 
It now follows that
\begin{equation}\label{roots}
{\rm Prol}(\Lie{n},\Lie{g}_0)=\Lie{g}_{-2}\oplus\Lie{g}_{-1}\oplus \Lie{g}_0\oplus \Lie{g}_1\oplus \Lie{g}_2
\end{equation}
 where
\begin{align*}
\Lie{g}_0={\rm span}\, \{ A_1,A_2\}, \quad \Lie{g}_1={\rm span}\, \{ B_1,B_2\} \quad {\rm and} \quad   \Lie{g}_2={\rm span}\, \{ C_2\}.
\end{align*}
Setting $H_1=2A_1$, $H_2=A_2$, $\bar{X}_1=2B_1$, $\bar{X}_2=2B_2$ and
$\bar{Y}=-4C_2$, one recognises in~\eqref{roots} the root space decomposition
of the simple Lie algebra $\Lie{su}(1,2)$, and direct calculation according to
the bracket definition in Section \ref{prolfield} gives the
correct bracket relations (see Table~\ref{su12}).

\end{ex}

{\it Proof of Theorem~\ref{main2}.}
By the discussion above, the Lie algebra of conformal vector fields is ${\rm Prol}(\Lie{n},\Lie{g}_0)$. Theorem~\ref{Tanaka} implies that the character of
${\rm Prol}(\Lie{n},\Lie{g}_0)$ is determined by the usual prolongation of $\Lie{h}_{0}(\Lie{n})\cap \Lie{g}_0$. This space identifies with a subalgebra $\Lie{h}^{(0)}$ of $\Lie{co}(n)$.

If $n\geq 3$, it follows from Example~\ref{ex1} that the usual prolongation of $\Lie{co}(n)$ is zero at the second step and so in particular $\Lie{h}^{(2)}=\{0\}$. Thus ${\rm Prol}(\Lie{n},\Lie{g}_0)$ has finite dimension if ${\rm dim}\Lie{g}_{-1}\geq 3$ .

Now set $n=2$. Since $\Lie{n}$ is not abelian, there exist $X_1,X_2$ in $\Lie{g}_{-1}$ such that $[X_1,X_2]=Y\neq 0$. Then it follows that 
$$
\Lie{co}(n)\cap \Lie{h}^{(0)}(\Lie{n})\subseteq \left\{ \bmatrix 0&b\\-b&0\endbmatrix \,|\, b\in\R \right\},
$$
with equality if $\Lie{n}$ is the three dimensional Heisenberg algebra. It follows easily that the space  on the right hand side of the inclusion above is of type $1$ in the sense described in Section~\ref{SSprol}, which proves Theorem~\ref{main2} for the case $n=2$ .
Notice that the same conclusion can of course be reached as a consequence of Example~\ref{exHeis}.

Finally, let $f$ be  a conformal map on some open domain $\mathcal{U}$. Composing with left translation if necessary, we may assume that $f(e)=e$. Arguing as we did at the end of Section~\ref{proof1}, we conclude that $f$ is induced by an automorphism of ${\rm Prol}(\Lie{n},\Lie{g}_0)$, so that the proof of Theorem~\ref{main2} is complete.

\vskip0.3cm
\begin{table}[h!]
\begin{center}
\begin{tabular}{c| c c c c c c c c } 
  & $ Y$ &  $X_1$  & $X_2$ & $H_1$ & $H_2$ & $\bar{X}_1$ & $\bar{X}_2$ & $\bar{Y}$ \\
  \hline
\\
$Y$ &  $0$  & $0$ &  $0$ & $-2Y$ & $0$ & $2X_2$ & $-2X_1$ & $2H_1$\\
\\
$X_1$&$0$&$0$&$Y$&$-X_1$&$X_2$&$-H_1$&$-3H_2$&$\bar{X}_2$\\
\\
$X_2$&$0$&$-Y$&$0$&$-X_2$&$-X_1$&$3H_2$&$-H_1$&$-\bar{X}_1$\\
\\
$H_1$&$2Y$& $X_1$&$X_2$&$0$&$0$&$-\bar{X}_1$&$\bar{X}_2$&$-2\bar{Y}$\\
\\
$H_2$&$0$&$-X_2$&$X_1$&$0$&$0$&$-\bar{X}_2$&$\bar{X}_1$&$0$\\
\\
$\bar{X}_1$&$-2X_2$&$H_1$&$-3H_2$&$\bar{X}_1$&$\bar{X}_2$&$0$&$-2\bar{Y}$&$0$\\
\\
$\bar{X}_2$&$2X_1$&$3H_2$&$H_1$&$\bar{X}_2$&$-\bar{X}_1$&$2\bar{Y}$&$0$&$0$\\
\\
$\bar{Y}$&$-2H_1$&$-\bar{X}_2$&$\bar{X}_1$&$2\bar{Y}$&$0$&$0$&$0$&$0$\\
\\
\end{tabular}
\end{center}
\caption{Bracket table of $\Lie{su}(1,2)$.}
\label{su12}
\end{table}

\vskip0.2cm

\end{document}